\tikzset{%
    symbol/.style={%
        draw=none,
        every to/.append style={%
            edge node={node [sloped, allow upside down, auto=false]{$#1$}}}
    }
}
\newtheorem{thm}{Theorem}[section]
\newtheorem{cor}[thm]{Corollary}
\newtheorem{prop}[thm]{Proposition}
\newtheorem{lem}[thm]{Lemma}
\theoremstyle{definition}
\newtheorem{defn}[thm]{Definition}
\theoremstyle{remark}
\newtheorem{rem}[thm]{Remark}
\newcommand{\C}{\Bbb{C}}
\newcommand{\Q}{\Bbb{Q}}
\newcommand{\mS}{\mathcal{S}}
\newcommand{\EE}{\mathcal{E}}
\newcommand{\Mod}{\text{Mod}}
\newcommand{\Id}{\text{Id}}
\newcommand{\Ind}{\text{Ind}}
\newcommand{\ind}{\text{ind}}
\newcommand{\Hom}{\text{Hom}}
\newcommand{\GL}{\text{GL}}
\newcommand{\field}{F}
\let\c@equation\c@thm
\numberwithin{equation}{section}
\title{Projective smooth representations in natural characteristic}
\author{Amit Ophir and Claus Sorensen}
\date{}
\begin{document}

\begin{abstract}
We investigate under which circumstances there exists nonzero {\it{projective}} smooth $\field[G]$-modules, where $\field$ is a field of characteristic $p$ and $G$ is a locally pro-$p$ group. 
We prove the non-existence of (non-trivial) projective objects for so-called {\it{fair}} groups -- a family including $\bf{G}(\frak{F})$ for a connected reductive group $\bf{G}$ defined over a non-archimedean local field $\frak{F}$. This was proved in \cite{SS24} for finite extensions $\frak{F}/\Bbb{Q}_p$. The argument we present in this note has the benefit of being completely elementary and, perhaps more importantly, adaptable to $\frak{F}=\Bbb{F}_q(\!(t)\!)$. Finally, we elucidate the fairness condition via a criterion in the Chabauty space of $G$. 
\end{abstract}

\maketitle



\onehalfspacing

\section{Introduction}

Projective objects play a prominent role in the modular representation theory of a finite group $G$. For instance, if $\field$ is a field of characteristic $p$ dividing $|G|$, the Grothendieck group $K_0(\field[G])$ is part of the Cartan-Brauer triangle \cite[p.~46]{Sch13}. Also, in the very definition of the all-important stable module category of $\field[G]$ one kills morphisms which factor through a projective module. 

One side of the $p$-adic Langlands correspondence (or rather its mod $p$ counterpart) involves smooth $\field[G]$-modules where $G$ is now an infinite $p$-adic reductive group, and $\field$ is still a field of characteristic $p$. (A $G$-representation $V$ is \emph{smooth} if the action $G \times V \longrightarrow V$ is continuous for the discrete topology on $V$.)
The smooth $\field[G]$-modules form an abelian category $\Mod_{\field}(G)$ and one could hope to recast modular representation theory in this generality. As was shown in \cite{SS24} this situation is dramatically different from the case of finite groups: There are \emph{no} projective objects in $\Mod_{\field}(G)$ other than $V=\{0\}$.  

We mention in passing that the complex case is dissimilar: If $G$ is a $p$-adic reductive group, and we fix a character $\chi$ of the center, the category
$\Mod_{\C}^\chi(G)$ of smooth $\C[G]$-modules with central character $\chi$ has lots of projective objects. In fact, any irreducible supercuspidal representation is projective (and injective). See \cite{AR04} for more precise results in this direction -- including a converse. 

One of the goals of this note is to extend parts of \cite{SS24} to groups $G=\bf{G}(\frak{F})$ where $\bf{G}$ is a connected reductive group over \emph{any} non-archimedean local field 
$\frak{F}$ -- possibly of positive characteristic. The arguments in \cite{SS24} make heavy use of Poincar\'{e} subgroups, and therefore only apply for finite extensions of $\Bbb{Q}_p$. 
We stress that \cite{SS24} had a different goal (to understand the derived functors of smooth induction) and the non-existence of nonzero projective objects was a byproduct. 
The methods of this note are more elementary and completely avoid cohomology. 

We work in greater generality. We continue to let $\field$ denote a field of characteristic $p$, but we allow $G$ to be any locally pro-$p$ group (by which we mean it admits an open subgroup which is pro-$p$ in the induced topology). If $G$ is discrete, smoothness is automatic, and we obviously have plenty of projective $\field[G]$-modules -- such as $\field[G]$ itself. 
For that reason we will always assume $G$ is non-discrete. 

The key hypothesis is the following: We assume $G$ admits an open subgroup $K$ such that for all open subgroups $H \subset K$ there exists an open subgroup
$H' \subsetneq H$ for which we have \emph{strict} inclusions
\begin{equation}\label{HYP}
K \cap gH'g^{-1} \subsetneq K \cap gHg^{-1} 
\end{equation}
for all $g \in G$. This condition also appeared in \cite{SS24} where it was shown to control the vanishing of the top derived functor 
$R^d\Ind_K^G(\field)$ for a $d$-dimensional $p$-adic Lie group $G$. A pair $(G,K)$ satisfying (\ref{HYP}) is called \emph{fair} in this note. Non-trivial reductive groups $G=\bf{G}(\frak{F})$ as above always admit a subgroup $K$ for which $(G,K)$ is fair. In fact one can take \emph{any} compact open subgroup $K \subset G$, as follows easily from Bruhat-Tits theory. 
We remark that in general (\ref{HYP}) implies $G$ is non-discrete (take $H=\{e\}$). 

With this terminology our main result is the following:

\begin{thm}\label{intromain}
Let $\field$ be a field of characteristic $p>0$. Let $G$ be a locally pro-$p$ group which admits an open subgroup $K$ such that $(G,K)$ is fair, i.e. satisfies (\ref{HYP}). 
Then the category of smooth $\field[G]$-modules $\Mod_{\field}(G)$ has \underline{no} nonzero projective objects. 
\end{thm}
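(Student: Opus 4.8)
The plan is to argue by contradiction: suppose $P \neq 0$ is a projective object in $\Mod_{\field}(G)$, pick a nonzero vector $v \in P$, and derive a contradiction from the fairness of $(G,K)$. Since $P$ is smooth, $v$ is fixed by some open subgroup, and after shrinking we may assume $v$ is fixed by an open subgroup $H \subset K$ with $H$ pro-$p$ (using that $G$ is locally pro-$p$). The submodule generated by $v$ is a quotient of $\field[H \backslash G] = \ind_H^G \field$, so there is a nonzero $G$-map $\ind_H^G \field \to P$; I would like to lift a suitable map \emph{into} $\ind_H^G\field$ through this surjection using projectivity. The cleaner route, which I expect to be the actual argument, is this: produce a short exact sequence $0 \to N \to M \to \ind_H^G \field \to 0$ in $\Mod_\field(G)$ that does \emph{not} split, yet such that any map from a projective (in particular from $P$, via the composite $P \twoheadrightarrow \la v \ra \hookrightarrow$ something, or more directly by testing projectivity of $P$ against this sequence) would be forced to split it — contradiction. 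So the heart of the matter is to manufacture, from hypothesis (\ref{HYP}), a non-split extension with a controlled target.

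Here is how I would build that extension. Fairness gives, for the subgroup $H \subset K$ fixing $v$, an open subgroup $H' \subsetneq H$ with $K \cap gH'g^{-1} \subsetneq K \cap gHg^{-1}$ for every $g \in G$. Consider the natural surjection of smooth modules $\pi \colon \ind_{H'}^G \field \to \ind_H^G \field$ (pushforward along $H' \subset H$, i.e. summing over cosets). I claim $\pi$ admits no $G$-equivariant splitting. Indeed, a splitting would in particular give an $H$-equivariant splitting of $(\ind_{H'}^G\field)|_H \to (\ind_H^G\field)|_H$; by Mackey's decomposition for smooth induction, restricting to $H$ and looking at the double coset $HgH$ (or rather the relevant $K$-orbit / the coset of the identity) the fiber over the canonical generator is $\ind_{H \cap H'}^{H}\field$-type data, and the strict inclusions $K \cap gH'g^{-1} \subsetneq K\cap gHg^{-1}$ force each local piece to be a proper, hence (over a field of characteristic $p$, with pro-$p$ stabilizers) non-split, augmentation-type surjection $\field[H\cap gHg^{-1} \backslash H \cap g H' g^{-1}\text{-cosets}] \to \field$. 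The key input is the standard fact that for a nontrivial finite (or profinite) $p$-group $Q$ over a field of characteristic $p$, the augmentation map $\field[Q] \to \field$ has no section — and more to the point $\field$ is not a direct summand of $\field[Q/Q']$ when $Q' \subsetneq Q$. The strictness in (\ref{HYP}), holding \emph{uniformly in $g$}, is exactly what guarantees the relevant stabilizer quotients are everywhere nontrivial, so no global section can exist.

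Granting that $\pi\colon \ind_{H'}^G\field \to \ind_H^G\field$ has no $G$-splitting, I finish as follows. The vector $v \in P$ yields a nonzero $G$-map $\varphi \colon \ind_H^G \field \to P$ sending the canonical generator to $v$; but actually what I want is a map the other way. Reconsider: projectivity of $P$ says that $\Hom_G(P,-)$ is exact, so applying it to $0 \to \ker\pi \to \ind_{H'}^G\field \xrightarrow{\pi} \ind_H^G\field \to 0$ gives surjectivity of $\Hom_G(P,\ind_{H'}^G\field) \to \Hom_G(P,\ind_H^G\field)$. Now I need a map $P \to \ind_H^G\field$ that does not lift; equivalently I run the argument with $P$ in the \emph{source}. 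The correct framing: $P$ projective implies $P$ is a direct summand of a direct sum of modules of the form $\ind_{U}^G\field$ for $U$ open (since these generate $\Mod_\field(G)$), so it suffices to show no nonzero $\ind_U^G\field$ is projective, and then that a summand of non-projectives containing a nonzero vector cannot be projective — but the slick version is: take the surjection $q \colon \ind_{H'}^G\field \to \langle v\rangle_P \subset P$ (restricting $\varphi\circ\pi$... ) and split it by projectivity of $\langle v \rangle$ — here I must be careful that $\langle v\rangle$ need not be projective.

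\textbf{Main obstacle.} The genuine difficulty, and the step I would spend the most care on, is organizing the induction/Mackey bookkeeping so that the \emph{uniform in $g$} strictness of (\ref{HYP}) translates into a genuinely non-split sequence of smooth $G$-modules rather than merely a non-split sequence of $H$-modules for each separate double coset — i.e. ruling out that the local non-splittings conspire into a global section. I expect the right device is: choose $v$ fixed by $H$, note any $G$-splitting $s$ of $\pi$ is determined by $s(e_H) \in \ind_{H'}^G\field$ which must be $H$-fixed and map to $e_H$; a nonzero $H$-fixed vector in $\ind_{H'}^G\field$ is supported on finitely many $H$-cosets in $H'\backslash G$, and on the coset through any chosen $g$ the $H$-fixed condition forces the coefficient function to be constant on $(H\cap gH'g^{-1})$-cosets inside $(H \cap gHg^{-1})$... and then plain counting against $\pi$ (which sums such coefficients) using $H\cap gHg^{-1} \neq H \cap gH'g^{-1}$ plus $p \mid [\,\cdot\,:\,\cdot\,]$ for pro-$p$ groups kills the possibility that $\pi(s(e_H)) = e_H$. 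Making this last count airtight — in particular confirming the relevant index is divisible by $p$, which uses that $H$ was chosen pro-$p$ — is the crux. Everything else (reducing to a single generator, invoking smoothness, invoking that $\ind_U^G\field$ generate) is routine.
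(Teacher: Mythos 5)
Your proposal has a genuine gap at its logical core: you never connect the non-splitness of $0\to\ker\pi\to\ind_{H'}^G(\field)\xrightarrow{\ \pi\ }\ind_H^G(\field)\to 0$ to the projectivity of $P$. To contradict projectivity you would need a nonzero morphism \emph{out of} $P$ into $\ind_H^G(\field)$ that fails to lift along $\pi$, and you produce no such morphism (nor need one exist); the natural map goes the other way, $\ind_H^G(\field)\to P$. Your fallbacks do not close this. Splitting the surjection $\ind_{H'}^G(\field)\twoheadrightarrow\field[G]v$ requires $\field[G]v$ to be projective, which, as you note yourself, is unavailable. And the observation that $P$ is a direct summand of $\bigoplus_i\ind_{U_i}^G(\field)$ is an automatic \emph{consequence} of projectivity, so it cannot produce a contradiction: showing each $\ind_{U_i}^G(\field)$ is non-projective does not preclude the direct sum from having a nonzero projective summand, and the step "a summand of non-projectives containing a nonzero vector cannot be projective" is asserted but not proved (and is not true in this generality). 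A secondary problem is that your use of fairness is miscalibrated: hypothesis (\ref{HYP}) gives $K\cap gH'g^{-1}\subsetneq K\cap gHg^{-1}$, whereas your counting needs $H\cap gH'g^{-1}\subsetneq H\cap gHg^{-1}$, which does not follow. In fact the non-splitness of $\pi$ needs no fairness at all: an $H$-fixed $f$ with $\pi(f)=\mathrm{char}_H$ would satisfy $\pi(f)(H)=[H:H']\,f(H')=0$ since $p\mid[H:H']$. So fairness does no work where you place it, and is absent where it is indispensable.

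The missing device is to induct from open subgroups of $K$ rather than of $G$. The paper forms the $K$-equivariant surjection $\varphi\colon S=\bigoplus_{v}\bigoplus_{U\in\Omega_v}\ind_U^K(\field)\to V$, indexed only by those $U$ fixing $v$ with $[K:U]>\dim_\field\field[K]v$; each $\varphi_{U,v}$ then has nonzero kernel, hence kills the one-dimensional space of constants $\ind_U^K(\field)^K$ (here $K$ is pro-$p$ and $\field$ has characteristic $p$), so $\varphi$ vanishes on all of $S^K$. Fairness is used exactly once: to prove that the \emph{smooth induction} $\Ind_K^G(\varphi)$ is still surjective. Indeed, for $F\in(\Ind_K^GV)^H$ the values $v_x=F(x)$ are fixed by $K\cap xHx^{-1}$, and the uniform strictness forces $[K:K\cap xH'x^{-1}]>[K:K\cap xHx^{-1}]\geq\dim_\field\field[K]v_x$, so each $v_x$ is hit by the summand of $S$ indexed by $U=K\cap xH'x^{-1}$; this is where the "uniform in $g$" strictness genuinely enters. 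Adjunction for $\Ind_K^G$ then converts $G$-projectivity of $V$ into a $K$-equivariant section $\sigma$ of $\varphi$, whence $V^K=\varphi(\sigma(V^K))\subset\varphi(S^K)=0$ and therefore $V=0$. Your outline contains neither the restriction to subgroups $U$ with $[K:U]>\dim_\field\field[K]v$ nor the passage through $\Ind_K^G$, and without some replacement for this mechanism the argument does not go through.
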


This extends \cite[Thm.~3.1]{CK23} to \emph{locally} pro-$p$ groups, and it partially generalizes \cite{SS24} to local fields $\frak{F}=\Bbb{F}_q(\!(t)\!)$ of characteristic $p$. 

In fact our Theorem \ref{cmain} gives a stronger result than Theorem \ref{intromain}: In Section \ref{cchar} we consider the category of representations with a fixed central character. 
More precisely, we fix a closed central subgroup $C \subset G$, a continuous character $\chi: C \rightarrow \field^\times$, and we show that the category $\Mod_{\field}^\chi(G)$ (of smooth 
$\field[G]$-modules on which $C$ acts via $\chi$) has no nonzero projective objects if $(G/C, KC/C)$ is fair ($\Rightarrow$ $C$ is not open). For the sake of exposition we have emphasized the case where $C$ is trivial here in the introduction. 

Fixing the central character may seem like a nuance, but the categories appearing in the $p$-adic local Langlands program for $\GL_2(\Q_p)$ consist of representations with a fixed central character. More precisely one considers locally admissible smooth $\field[\GL_2(\Q_p)]$-modules with central character $\chi$ (and similarly for more general coefficient rings 
$\mathcal{O}$ instead of $\field$). See \cite{Pas13} for example. 

In Section \ref{exact} we suggest one way out of the no projectives conundrum, which is to endow $\Mod_{\field}(G)$ with a coarser exact structure relative to which there {\it{are}} enough projectives. We also discuss the corresponding stable category, following \cite{Kel96}. This bears a resemblance to the relative homological approach of \cite[Sect.~2, Sect.~5]{DK23}.

In Section \ref{chab} we give a topological criterion for $(G,K)$ being fair, in terms of the Chabauty space $\mS(G)$ of all closed subgroups. We reproduce an argument of Pierre-Emmanuel Caprace proving that $(G,K)$ is fair if and only if the closure of the $G$-conjugacy class of $K$ contains no discrete subgroups.


\section{Preliminary remarks for profinite groups}\label{prof}

Let $K$ be an infinite profinite group. We let $\Omega$ denote the set of open subgroups of $K$. 

\begin{lem}\label{idx}
The index $[K:U]$ becomes arbitrarily large as $U \in \Omega$ varies. 
\end{lem}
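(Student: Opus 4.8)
The plan is to argue by contradiction, exploiting the fact that every open subgroup of an infinite profinite group is again infinite and hence itself contains a proper open subgroup. So suppose the indices $[K:U]$ for $U \in \Omega$ were bounded, say by some integer $N$. The set of indices actually attained is then a nonempty subset of $\{1,\dots,N\}$, so it has a maximum; pick $V \in \Omega$ with $M := [K:V]$ equal to that maximal value.

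Next I would observe that $V$ is itself an infinite profinite group: being open in $K$ it is also closed, hence profinite, and since $[K:V]=M$ is finite while $K$ is infinite, $V$ must be infinite. The key point is then that an infinite profinite group has a proper open subgroup. Indeed, the identity of $V$ admits a neighbourhood basis consisting of open normal subgroups of $V$, and their intersection is $\{e\}$ by Hausdorffness; if every one of them were equal to $V$, that intersection would be $V \neq \{e\}$, a contradiction. Hence there is a proper open subgroup $W \subsetneq V$.

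Finally, $W$ is open in $K$ and $[K:W] = [K:V]\cdot[V:W] = M\cdot[V:W] > M$ because $[V:W]\geq 2$, contradicting the maximality of $M$. This proves the lemma. (Equivalently, and without invoking a contradiction, one can simply iterate the ``proper open subgroup'' step to build a strictly descending chain $K \supsetneq V_1 \supsetneq V_2 \supsetneq \cdots$ of open subgroups, whence $[K:V_n] = \prod_{i=1}^{n}[V_{i-1}:V_i] \geq 2^n \to \infty$.) I do not expect a genuine obstacle here: the only ingredient beyond bookkeeping is the existence of a proper open subgroup in an infinite profinite group, which is immediate from the definition of the profinite topology.
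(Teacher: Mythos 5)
Your proof is correct and rests on the same key point as the paper's: every open subgroup of the infinite profinite group $K$ properly contains a further open subgroup, so the index can be strictly increased indefinitely. The paper produces the proper open subgroup by choosing $u \in U \setminus \{e\}$ and a small $U' \in \Omega$ with $uU' \subset U$ and $e \notin uU'$, whereas you invoke the neighbourhood basis of open normal subgroups with trivial intersection; your contradiction-with-a-maximal-index wrapper is an inessential repackaging of the direct iteration you sketch in your closing parenthesis.
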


\begin{proof}
Start with any $U \in \Omega$. Since $K$ is infinite we may pick an element $u \in U\backslash \{e\}$. Since $U$ is open there is a $U' \in \Omega$ such that $uU'\subset U$. By choosing $U'$ small enough we can arrange that $e \notin uU'$. Clearly $U' \subsetneq U$, and consequently $[K:U']>[K:U]$. Thus we can make the index arbitrarily large. 
\end{proof}

We fix a field $\field$ and consider the category $\Mod_{\field}(K)$ of smooth $K$-representations on $\field$-vector spaces. Recall that a representation $V$ is smooth if 
every $v \in V$ has an open stabilizer -- in other words $v$ is fixed by some $U \in \Omega$. In particular $\dim_{\field} \field[K]v<\infty$. We let $V^U$ denote the subspace of $U$-fixed vectors in $V$.

\begin{defn}\label{omegav}
For $v \in V$ as above, we let $\Omega_v$ denote the set of $U \in \Omega$ for which 
\begin{itemize}
\item[(a)] $U$ fixes $v$, and 
\item[(b)] $[K:U]>\dim_{\field} \field[K]v$. 
\end{itemize}
\end{defn}  

Note that $\Omega_v \neq \varnothing$ by Lemma \ref{idx}. By Frobenius reciprocity, each vector $v \in V^U$ corresponds to a morphism in $\Mod_{\field}(K)$,
\begin{align*}
\varphi_{U,v}: \ind_U^K(\field) &\longrightarrow V \\
             f &\longmapsto {\sum}_{\kappa \in U \backslash K} f(\kappa) \kappa^{-1}v.
\end{align*}
 Here $\ind_U^K(\field)$ is the space of functions $f: U \backslash K \rightarrow \field$, on which $K$ acts via right translations. This is a finite-dimensional smooth $K$-representation of dimension $[K:U]$. Obviously $\text{im}(\varphi_{U,v})=\field[K]v$, so $\varphi_{U,v}$ is \emph{not} injective when $U \in \Omega_v$. 
 
We consider the sum of all these morphisms,
\begin{align*}
\varphi: S=\joinrel=\bigoplus_{v\in V} \bigoplus_{U \in \Omega_v} \ind_U^K(\field) &\longrightarrow V \\
             (f_{U,v})_{U,v} &\longmapsto {\sum}_{U,v} \varphi_{U,v}(f_{U,v}). 
\end{align*}
Clearly $\varphi$ is surjective since $\varphi_{U,v}(\text{char}_U)=v$ (and any $v\in V$ is fixed by some $U \in \Omega_v$). If $V$ is a projective object of $\Mod_{\field}(K)$
there exists a section $\sigma: V \rightarrow S$ of $\varphi$ in $\Mod_{\field}(K)$. Thus $\varphi \circ \sigma=\text{Id}_V$.

\begin{prop}\label{case}
Suppose $\field$ is a field of characteristic $p>0$.
If $p^\infty$ divides $|K|$ there are no nonzero projective objects in $\Mod_{\field}(K)$, and conversely. 
\end{prop}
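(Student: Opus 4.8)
The plan is to prove the two directions separately. The converse is short; the forward implication carries all the weight.

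\textbf{The converse ($p^\infty\nmid|K|$ yields a nonzero projective).} Here I would exhibit one explicitly. If $p^\infty\nmid|K|$, a Sylow pro-$p$ subgroup $P\subseteq K$ is finite; since the open normal subgroups of the profinite group $K$ intersect in $\{e\}$, choose an open normal $N\trianglelefteq K$ with $N\cap P=\{e\}$. As all Sylow pro-$p$ subgroups of $K$ are conjugate and $N$ is normal, a Sylow pro-$p$ subgroup of $N$ is conjugate into $P\cap N=\{e\}$, so $p\nmid|N|$. Then every finite continuous quotient of $N$ has order prime to $p$, so by Maschke every smooth $\field[N]$-module is semisimple; in particular $\field$ is projective in $\Mod_\field(N)$. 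Since $\ind_N^K$ is left adjoint to the exact functor $\mathrm{Res}^K_N$, it preserves projectives, whence $\ind_N^K(\field)=\field[N\backslash K]\neq 0$ is a projective object of $\Mod_\field(K)$.

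\textbf{The forward direction ($p^\infty\mid|K|$ yields no nonzero projective).} Suppose $V\neq 0$ is projective; fix $0\neq v_0\in V$, put $U_0=\mathrm{Stab}_K(v_0)$ (open, so $v_0\in V^{U_0}$), and let $\sigma\colon V\to S$ be a section of $\varphi$. The key manipulation: by $K$-equivariance $\sigma(v_0)\in S^{U_0}$, so writing $\sigma(v_0)=\sum_{j=1}^m f_j$ with $f_j$ in the $(a_j,U_j)$-summand, each $f_j$ is a $\field$-combination of double-coset indicators $\text{char}_{U_jxU_0}$, $x\in U_j\backslash K/U_0$. Plugging this into $v_0=\varphi(\sigma(v_0))=\sum_j\varphi_{U_j,a_j}(f_j)$ and using $\varphi_{U_j,a_j}(\text{char}_{U_jxU_0})=\sum_{\kappa\in U_j\backslash U_jxU_0}\kappa^{-1}a_j$ yields
\[
 v_0\;=\;\sum_{j=1}^m\ \sum_{x\in U_j\backslash K/U_0}c_{j,x}\,\mathrm{tr}^{U_0}_{U_0\cap x^{-1}U_jx}\bigl(x^{-1}a_j\bigr),
\]
where $\mathrm{tr}^{U_0}_H(w)=\sum_{u\in H\backslash U_0}u^{-1}w$ for $w\in V^H$. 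So projectivity pins $v_0$ inside the span of transfers from the subgroups $U_0\cap x^{-1}U_jx$; under a suitable choice of $v_0$ and $U_0$ one wants all of these to be \emph{proper} open subgroups of $U_0$ and --- using $p^\infty\mid|K|$ --- to have index in $U_0$ divisible by $p$, so that every $\mathrm{tr}^{U_0}_H$ kills $V^{U_0}\ni v_0$.

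\textbf{Expected main obstacle.} Converting the displayed identity into a contradiction is the heart of the matter, because a single transfer $\mathrm{tr}^{U_0}_H$ with $p\mid[U_0:H]$ need not have small image in $V^{U_0}$ (for $V=\ind_H^{U_0}(\field)$ it is already surjective onto $V^{U_0}$). The vanishing has to be extracted from the full combination supplied by the section $\sigma$ --- and, I expect, only after iterating along a descending chain $U_0\supsetneq U_1\supsetneq\cdots$ of open subgroups fixing $v_0$ with $p\mid[U_n:U_{n+1}]$ (available precisely because $p^\infty\mid|K|$), while the index bound $[K:U]>\dim_\field\field[K]v$ built into $\Omega_v$ keeps the finitely many auxiliary vectors $a_j$ under control. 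This iteration, and ruling out persistent surjectivity of the transfers, is the only step where the full strength $p^\infty\mid|K|$ is genuinely needed rather than merely $p\mid|K|$: e.g.\ $K=\Z/p\times L$ with $L=\prod_n\Z/\ell_n$ ($\ell_n\neq p$ distinct primes) has $p\mid|K|$ yet $\ind_L^K(\field)$ is a nonzero projective.
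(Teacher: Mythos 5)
Your converse direction is fine: it is essentially the paper's argument (find an open subgroup of pro-order prime to $p$, then use that induction from it preserves projectives / that taking invariants under it is exact), with the existence of such a subgroup spelled out via Sylow theory.

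The forward direction, however, has a genuine gap, and you have correctly located it yourself: the passage from the transfer identity
$v_0=\sum_{j,x}c_{j,x}\,\mathrm{tr}^{U_0}_{U_0\cap x^{-1}U_jx}(x^{-1}a_j)$
to a contradiction is never carried out. As you observe, a single transfer $\mathrm{tr}^{U_0}_H$ with $p\mid [U_0:H]$ can be surjective onto $V^{U_0}$, so nothing forces the right-hand side to miss $v_0$, and the proposed ``iteration along a descending chain'' is left entirely unexecuted. The double-coset expansion is correct but is not the road the paper takes, and I do not see how to complete it as stated. The two ideas you are missing are: (i) first reduce to the case where $K$ itself is an \emph{infinite pro-$p$} group, by restricting to a Sylow pro-$p$ subgroup $K'$ (restriction preserves projectives because $\ind_{K'}^K$ is an exact right adjoint) --- your closing example with $K=\Z/p\times L$ shows you sensed that $p\mid|K|$ alone is insufficient, but you never perform this reduction; and (ii) instead of working with the stabilizer $U_0$ of a single vector, evaluate everything on the full $K$-invariants. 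For a pro-$p$ group $K$ in characteristic $p$, any nonzero smooth representation has nonzero $K$-fixed vectors; since the index condition $[K:U]>\dim_\field\field[K]v$ built into $\Omega_v$ forces $\ker(\varphi_{U,v})\neq\{0\}$, that kernel must contain the one-dimensional space $\ind_U^K(\field)^K$ of constants, so $\varphi_{U,v}$ kills the constants and hence $\varphi$ vanishes on all of $S^K$. The section $\sigma$ maps $V^K$ into $S^K$, and $\varphi\circ\sigma=\Id_V$ then gives $V^K=\{0\}$, whence $V=\{0\}$. This bypasses your transfer analysis entirely; without step (ii) (or a completed version of your iteration) the forward implication is not proved.
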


\begin{proof}
First assume $p^\infty$ divides the pro-order $|K|$, and pick a Sylow pro-$p$-subgroup $K' \subset K$ (which is infinite by assumption). The restriction functor 
$\Mod_{\field}(K) \rightarrow \Mod_{\field}(K')$ preserves projective objects since $\ind_{K'}^K$ is an exact right adjoint functor (as $K$ is compact). We may 
therefore assume that $K$ is an infinite pro-$p$ group.  

If $V$ is a projective object of $\Mod_{\field}(K)$, we consider a section $\sigma$ of $\varphi$ as above. The section restricts to an embedding 
$\sigma: V^K \hookrightarrow S^K$. As noted earlier, $\ker(\varphi_{U,v})\neq \{0\}$ when $U \in \Omega_v$. Therefore the inclusion
$$
\{0\}\neq \ker(\varphi_{U,v})^K \subset \ind_U^K(\field)^K=\{\text{constants}\}
$$
is an equality. In particular $\varphi_{U,v}$ vanishes on the constant functions, and consequently $\varphi$ vanishes on $S^K$. Since $\varphi \circ \sigma=\text{Id}_V$ we deduce that
$V^K=\{0\}$, which is equivalent to $V=\{0\}$. (See \cite[Lem.~1, p.~111]{AW67} for example.)

For the converse, suppose $p$ has finite exponent in $|K|$. Then there exists a $U \in \Omega$ such that $p \nmid |U|$. A standard averaging argument shows the functor $(-)^U$ is exact on $\Mod_{\field}(K)$. By Frobenius reciprocity this amounts to $\ind_U^K(\field)$ being a projective object in $\Mod_{\field}(K)$.
\end{proof}

This result (Proposition \ref{case}) was proved independently in \cite[Thm.~3.1]{CK23} using a different method. 

\section{The general case}\label{gen}

We now take $G$ to be a \emph{locally} profinite group, by which we mean it has an open subgroup $K$ which is profinite in the induced topology. We assume $G$ is not discrete, i.e. 
any such $K$ is infinite. We choose a $K$ once and for all, and continue to let $\Omega=\{\text{open subgroups of $K$}\}$. 

\begin{defn}\label{fair}
We say the pair $(G,K)$ is \underline{fair} if $\forall H \in \Omega$ there is an $H' \in \Omega$ such that 
$$
K \cap gH'g^{-1} \subsetneq K \cap gHg^{-1}
$$
for all $g \in G$. The group $G$ is fair if $(G,K)$ is fair for some profinite open subgroup $K \subset G$. 
\end{defn}

In what follows $\Ind_K^G$ denotes the full smooth induction functor, i.e. the \emph{right} adjoint to the restriction functor $\Mod_{\field}(G) \rightarrow \Mod_{\field}(K)$. Note that $\Ind_K^G$ is not exact in general; see \cite{SS24}. To fix ideas we adopt the convention that $G$ acts by right translations on induced representations. 

\begin{rem}
The fairness condition (Df. \ref{fair}) also appeared in \cite{SS24}. For a $d$-dimensional $p$-adic Lie group $G$, and $K$ a compact open subgroup, it is shown in \cite{SS24}
that $(G,K)$ is fair if and only if $R^d\Ind_K^G(\field)=0$. (Here $R^i\Ind_K^G$ is the $i^\text{th}$ right derived functor of $\Ind_K^G$.)
\end{rem}

Now $V$ denotes an object of $\Mod_{\field}(G)$. We will often denote its restriction $V|_K$ simply by $V$ when there is no risk of confusion. From Section \ref{prof} we have the morphism 
$\varphi: S \twoheadrightarrow V$ in $\Mod_{\field}(K)$. 

\begin{prop}\label{ind}
The induced morphism $\Ind_K^G(\varphi)$ is surjective if $(G,K)$ is fair (cf. Def. \ref{fair}). 
\end{prop}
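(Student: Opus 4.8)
The plan is to prove surjectivity of $\Ind_K^G(\varphi)$ directly, by lifting an arbitrary element $f \in \Ind_K^G(V)$ coset by coset. First I would fix the concrete model of full smooth induction: $\Ind_K^G(V)$ consists of the smooth functions $f \colon G \to V$ with $f(kg) = kf(g)$ for $k \in K$, where smoothness means $f$ is invariant under right translation by some open subgroup of $G$; intersecting with $K$, we may take this subgroup to be some $U \in \Omega$. Such an $f$ is the same datum as a choice, for each double coset in $K \backslash G / U$, of a vector $f(g) \in V^{K \cap gUg^{-1}}$ (with $g$ running over fixed representatives): one is forced to set $f(kgu) = k\,f(g)$, and this is well defined precisely because $f(g)$ is fixed by $K \cap gUg^{-1}$.

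The crucial observation is a dimension bound that is \emph{uniform} in $g$: for every double coset representative $g$ one has $\dim_{\field} \field[K] f(g) \le [K : K \cap gUg^{-1}]$, since $\field[K]f(g)$ is the $\field$-span of the vectors $f(kg) = k f(g)$, $k \in K$, and $f(kg)$ depends only on the right $U$-coset of $kg$, of which there are exactly $[K : K \cap gUg^{-1}]$ inside $KgU$. Now I would invoke the fairness hypothesis with $H = U$ to produce $U' \in \Omega$ with $K \cap gU'g^{-1} \subsetneq K \cap gUg^{-1}$ for all $g \in G$; taking $g = e$ shows $U' \subsetneq U$, so $f$ is still right-$U'$-invariant, and we may refine to the decomposition indexed by $K \backslash G / U'$. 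For each such representative $g$, strictness of the inclusion gives $[K : K \cap gU'g^{-1}] > [K : K \cap gUg^{-1}] \ge \dim_{\field} \field[K] f(g)$, which is exactly the statement $K \cap gU'g^{-1} \in \Omega_{f(g)}$. Hence $\ind_{K \cap gU'g^{-1}}^K(\field)$ occurs as a direct summand of $S$ — namely in the block indexed by the pair $(f(g),\, K \cap gU'g^{-1})$ — and the element $\text{char}_{K \cap gU'g^{-1}}$ of that summand is fixed by $K \cap gU'g^{-1}$ and satisfies $\varphi(\text{char}_{K \cap gU'g^{-1}}) = \varphi_{K \cap gU'g^{-1},\, f(g)}(\text{char}_{K \cap gU'g^{-1}}) = f(g)$.

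It then remains to glue these local lifts: define $\tilde f \colon G \to S$ by $\tilde f(kgu) = k \cdot \text{char}_{K \cap gU'g^{-1}}$ for $k \in K$, $u \in U'$, and $g$ the chosen representative of its $K$-$U'$ double coset. Well-definedness is again exactly the $(K \cap gU'g^{-1})$-invariance of $\text{char}_{K \cap gU'g^{-1}}$; by construction $\tilde f$ is $K$-equivariant and right-$U'$-invariant, hence smooth, so $\tilde f \in \Ind_K^G(S)$, and tracing through the definitions gives $\Ind_K^G(\varphi)(\tilde f) = f$. The one genuinely delicate point — and the sole place where fairness enters — is that the improvement of the index is uniform in $g$, so that a single open subgroup $U'$ simultaneously handles all of the (possibly infinitely many) double cosets at once; without this one could still lift $f$ pointwise, but not within a single smooth function, which is what membership in $\Ind_K^G(S)$ demands.
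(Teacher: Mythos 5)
Your proof is correct and follows essentially the same route as the paper's: invoke fairness with $H$ equal to the open subgroup fixing the given function, use the resulting uniform strict index drop to place each value $f(g)$ in the image of the $\bigl(f(g),\,K\cap gU'g^{-1}\bigr)$-summand of $S$ via the characteristic function, and glue these lifts over representatives of $K\backslash G/U'$ into a single $U'$-invariant element of $\Ind_K^G(S)$. You also correctly identify the one place fairness is genuinely needed — the uniformity in $g$ — which is exactly the point the paper's argument turns on.
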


\begin{proof}
Start with an arbitrary $F \in \Ind_K^G V$ and pick an $H \in \Omega$ fixing $F$. We simplify the notation by introducing
$v_x:=F(x) \in V^{K \cap xHx^{-1}}$ for all $x \in G$. If $H'$ satisfies the condition in Def. \ref{fair}
we see that
$$
[K: K \cap xH'x^{-1}]>[K: K \cap xHx^{-1}]\geq \dim_{\field} \field[K]v_x. 
$$
Thus $K \cap xH'x^{-1} \in \Omega_{v_x}$, and it makes sense to consider the contribution to $S$ indexed by $v=v_x$ and $U=K \cap xH'x^{-1}$. 
$$
\text{\underline{Claim}: $\forall x \in G$ there is an $f_x \in S^{K \cap xH'x^{-1}}$ such that $\varphi(f_x)=v_x$.}
$$
To see this, consider the morphism $\varphi_{K \cap xH'x^{-1},v_x}$. It maps the characteristic function $\text{char}_{K \cap xH'x^{-1}}$ to $v_x$. We take $f_x$ to be 
$\text{char}_{K \cap xH'x^{-1}}$ viewed as a vector in the summand $\ind_{K \cap xH'x^{-1}}^K(\field)$ of $S$ indexed by $v=v_x$ and $U=K \cap xH'x^{-1}$. This proves the claim. 

Choose a set of representatives $R'$ for $K \backslash G/H'$. This uniquely determines an $A \in (\Ind_K^G S)^{H'}$ such that $A(r')=f_{r'}$ for all $r' \in R'$. We check that 
$\Ind_K^G(\varphi)(A)=F$. Let $g \in G$ be arbitrary, and write $g=\kappa r' h'$ with $\kappa \in K$, $r' \in R'$, and $h' \in H'$. Then
$$
\Ind_K^G(\varphi)(A)(g)=\varphi(A(g))=\varphi(A(\kappa r'))=\kappa \varphi(A(r'))=\kappa \varphi(f_{r'})=\kappa v_{r'}.
$$
On the other hand, since $F$ is fixed by $H' \subset H$, 
$$
F(g)=F(\kappa r')=\kappa F(r')=\kappa v_{r'}.
$$
This shows that indeed $\Ind_K^G(\varphi)(A)=F$, and as $F$ is arbitrary $\Ind_K^G(\varphi)$ is surjective. 
\end{proof}

Adjunction gives us the commutative diagram
$$
\begin{tikzcd}
\Hom_{\Mod_{\field}(K)}(V,S) \arrow[r, "\sim"] \arrow[d, dashed, "\varphi_*"]
& \Hom_{\Mod_{\field}(G)}(V,\Ind_K^G S) \arrow[d, "\Ind_K^G(\varphi)_*"] \\
\Hom_{\Mod_{\field}(K)}(V,V) \arrow[r, "\sim"]
& \Hom_{\Mod_{\field}(G)}(V,\Ind_K^G V).
\end{tikzcd}
$$
Assume $(G,K)$ is fair. If $V$ is a projective object of $\Mod_{\field}(G)$, the morphism $\Ind_K^G(\varphi)_*$ in the diagram is surjective by Proposition \ref{ind}. Hence so is the dashed morphism 
$\varphi_*$. In particular $\varphi$ admits a section $\sigma$ in $\Mod_{\field}(K)$. 

\begin{thm}\label{main}
Let $\field$ be a field of characteristic $p>0$. Suppose $(G,K)$ is fair for some infinite pro-$p$ open subgroup $K \subset G$. Then 
there are no nonzero projective objects in $\Mod_{\field}(G)$.
\end{thm}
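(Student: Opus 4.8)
The plan is to imitate the profinite case (Proposition \ref{case}), but working with $K$-fixed vectors and the section $\sigma$ whose existence was just extracted from fairness. First I would record the key reduction already carried out before the theorem statement: if $V \in \Mod_\field(G)$ is projective and $(G,K)$ is fair, then the adjunction square shows $\varphi_* : \Hom_{\Mod_\field(K)}(V,S) \to \Hom_{\Mod_\field(K)}(V,V)$ is surjective, so there is a $K$-equivariant section $\sigma : V \to S$ with $\varphi \circ \sigma = \Id_V$. From here the argument is purely about the profinite group $K$ and is essentially identical to the second half of the proof of Proposition \ref{case}.

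Next I would pass to $K$-invariants. Since $\sigma$ is $K$-equivariant it restricts to an injection $\sigma : V^K \hookrightarrow S^K$, and $\varphi$ restricts to a map $S^K \to V^K$ splitting it. Now I analyze $S^K$: since $S = \bigoplus_{v}\bigoplus_{U \in \Omega_v} \ind_U^K(\field)$ and taking $K$-invariants commutes with direct sums, $S^K = \bigoplus_{v,U} \ind_U^K(\field)^K$, and $\ind_U^K(\field)^K$ is exactly the line of constant functions $U\backslash K \to \field$. The crucial point, exactly as in Proposition \ref{case}: because $K$ is an infinite pro-$p$ group and $U \in \Omega_v$ (so $[K:U] > \dim_\field \field[K]v \geq 1$, i.e. $U \subsetneq K$), the kernel of $\varphi_{U,v} : \ind_U^K(\field) \to V$ is a nonzero $K$-subrepresentation of a finite-dimensional $\field[K]$-module, hence has nonzero $K$-fixed vectors (a nonzero finite-dimensional smooth representation of a pro-$p$ group over a field of characteristic $p$ has nonzero invariants --- cf.\ \cite[Lem.~1, p.~111]{AW67}). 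Since $\ind_U^K(\field)^K$ is one-dimensional, this forces $\ker(\varphi_{U,v})^K = \ind_U^K(\field)^K$, i.e.\ $\varphi_{U,v}$ kills the constants.

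Putting these together: every summand's contribution to $S^K$ lies in $\ker\varphi$, so $\varphi|_{S^K} = 0$. But $\varphi \circ \sigma = \Id$ on $V^K$ and $\sigma(V^K) \subset S^K$, so $V^K = 0$. Applying \cite[Lem.~1, p.~111]{AW67} once more (now to $V$ itself, viewed as a smooth representation of the pro-$p$ group $K$ over characteristic $p$): a smooth representation of a pro-$p$ group with no nonzero invariants is zero, since any nonzero vector generates a finite-dimensional subrepresentation which would itself have invariants. Hence $V = 0$, which is the claim.

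I do not anticipate a serious obstacle: all the hard work --- producing the surjection $\Ind_K^G(\varphi)$ from fairness, and hence the section $\sigma$ --- has already been done in Proposition \ref{ind} and the adjunction diagram. The only points requiring care are (i) that $U \in \Omega_v$ genuinely gives $U \subsetneq K$ so that $\ker\varphi_{U,v} \neq 0$ (this is condition (b) of Definition \ref{omegav} together with the surjectivity of $\varphi$, which guarantees $\Omega_v \neq \varnothing$ and that every $v$ is fixed by some $U \in \Omega_v$), and (ii) invoking the invariants-nonvanishing lemma for pro-$p$ groups in characteristic $p$ in both directions --- on the finite-dimensional kernels and on $V$ itself. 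Neither is more than a citation to \cite[Lem.~1, p.~111]{AW67} plus the observation that smoothness reduces everything to the finite-dimensional case.
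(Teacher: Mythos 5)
Your proposal is correct and follows essentially the same route as the paper: extract the $K$-equivariant section $\sigma$ from Proposition \ref{ind} via the adjunction square, then rerun the second half of the proof of Proposition \ref{case} on $K$-invariants. One small wording point: in your closing remark (i) the nonvanishing of $\ker(\varphi_{U,v})$ comes from the full inequality $[K:U]>\dim_\field \field[K]v$ (domain strictly larger than image), not merely from $U\subsetneq K$ --- but you state the correct inequality in the body, so this is only a slip of phrasing.
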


\begin{proof}
By the preliminary remarks leading up to the Theorem, $\varphi$ admits a section $\sigma$ in $\Mod_{\field}(K)$. The (second paragraph of the) proof of Proposition \ref{case} now applies verbatim.
\end{proof}

For future reference perhaps it is worth highlighting the following reformulation of Theorem \ref{main}.

\begin{cor}\label{center}
Keep the setup and assumptions from Theorem \ref{main}. Suppose $\chi:\frak{Z}\rightarrow \field^\times$ is a continuous character of a profinite abelian subgroup $\frak{Z}\subset G$
with pro-order prime-to-$p$. Then the subcategory $\Mod_{\field}^\chi(G)$ (=smooth $\field[G]$-modules on which $\frak{Z}$ acts by $\chi$) has no nonzero projective objects.
\end{cor}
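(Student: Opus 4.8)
The plan is to deduce Corollary \ref{center} from Theorem \ref{main} by a standard ``twisting away the central character'' argument, reducing the category $\Mod_{\field}^\chi(G)$ to the plain category $\Mod_{\field}(G/\frak{Z})$ --- or more precisely, to $\Mod_{\field}(G)$ after trivializing the $\frak{Z}$-action. First I would observe that since $\frak{Z}$ is profinite of pro-order prime to $p$, and $\field$ has characteristic $p$, the character $\chi$ has finite order prime to $p$; its values lie in a finite subgroup of $\field^\times$ of order prime to $p$. Because $\frak{Z}$ is central, for any $V \in \Mod_{\field}^\chi(G)$ the functor $(-)^{\frak{Z}}$ of $\frak{Z}$-isotypic vectors (for the character $\chi$) is exact --- this is the usual averaging/idempotent argument, valid since $p \nmid |\frak{Z}/\ker\chi|$ so the relevant idempotent $e_\chi = \frac{1}{|\cdot|}\sum \chi(z)^{-1} z$ makes sense in $\field[\frak{Z}/\ker\chi]$. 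Consequently $\Mod_{\field}^\chi(G)$ is a direct summand of $\Mod_{\field}(G)$ as an abelian category (it is the image of an exact idempotent endofunctor $V \mapsto e_\chi V$), and a direct summand subcategory inherits projectives: an object $P$ is projective in $\Mod_{\field}^\chi(G)$ if and only if it is projective in $\Mod_{\field}(G)$.

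With that reduction in hand, the conclusion is immediate: if $P \in \Mod_{\field}^\chi(G)$ were a nonzero projective object, then $P$ would be a nonzero projective object of $\Mod_{\field}(G)$, contradicting Theorem \ref{main}. So one really only needs the observation that projectivity is detected in the ambient category once the inclusion $\Mod_{\field}^\chi(G) \hookrightarrow \Mod_{\field}(G)$ has an exact two-sided adjoint (here the adjoint in both directions is $V \mapsto e_\chi V$, up to the trivial inclusion). Alternatively, and perhaps more transparently, I would phrase it via the short exact sequence of functors: given a surjection $B \twoheadrightarrow C$ in $\Mod_{\field}^\chi(G)$ and a map $P \to C$, view everything in $\Mod_{\field}(G)$, lift using projectivity of $P$ there to get $P \to B$, then post-compose with $e_\chi$ (which fixes $P$ and $B$ since they already lie in the $\chi$-isotypic part) to land back in $\Mod_{\field}^\chi(G)$; this shows $P$ is projective in the subcategory.

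The only genuine point requiring care --- and the ``main obstacle,'' though it is mild --- is justifying that $(-)^{\frak{Z},\chi}$ is exact, i.e. that the idempotent $e_\chi$ is well defined. Since $\frak{Z}$ is profinite and $\chi$ is continuous with values in a (necessarily discrete, since $\field^\times$ carries no topology here but the image is finite) subgroup of $\field^\times$, the kernel $\ker\chi$ is open in $\frak{Z}$, so $\frak{Z}/\ker\chi$ is a finite abelian group; its order divides (a finite quotient of) the pro-order of $\frak{Z}$ and is therefore prime to $p$, so $|\frak{Z}/\ker\chi|$ is invertible in $\field$ and $e_\chi := \frac{1}{|\frak{Z}/\ker\chi|}\sum_{\bar z} \chi(\bar z)^{-1}\bar z$ is a genuine idempotent in $\field[\frak{Z}/\ker\chi]$ acting on every smooth $\field[\frak{Z}]$-module. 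For a smooth representation every vector is fixed by an open subgroup, hence (after averaging over the finite group $\frak{Z}/\ker\chi$, using centrality of $\frak{Z}$ to keep the $G$-structure) $e_\chi$ is $G$-equivariant, and splitting $V = e_\chi V \oplus (1-e_\chi)V$ functorially gives the exactness. Once this is recorded, everything else is formal, and the corollary follows from Theorem \ref{main}.
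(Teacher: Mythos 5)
Your overall route is the same as the paper's: everything reduces to showing that the $\chi$-eigenspace functor $V \mapsto V^{\chi}$ is an exact right adjoint to the inclusion $\iota\colon \Mod_{\field}^{\chi}(G)\hookrightarrow \Mod_{\field}(G)$, so that $\iota$ preserves projectives and Theorem \ref{main} gives the contradiction. The paper proves exactness by the pointwise average $\tilde v=\frac{1}{[\frak{Z}:\frak{Z}\cap U]}\sum_{z\in\frak{Z}/\frak{Z}\cap U}\chi(z^{-1})zv$ for $U$ a small enough open subgroup fixing $v$; you package the same averaging into an idempotent and a direct-sum decomposition of the category. That is a difference of phrasing, not of substance.

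There are, however, two points in your write-up that need repair. First, your idempotent $e_{\chi}\in\field[\frak{Z}/\ker\chi]$ does not act on a general smooth $\field[\frak{Z}]$-module: $\ker\chi$ is open in $\frak{Z}$ but it need not act trivially on $V$ (for instance $\frak{Z}$ could act through a character that is nontrivial on $\ker\chi$), so the symbols $\bar z$ have no meaning on $V$; and even formally, when $\chi$ is trivial your $e_{\chi}$ is the identity, which is certainly not the projector onto the $\frak{Z}$-invariants. The correct projector must be defined vector by vector as the average over $\frak{Z}/(\frak{Z}\cap U)$ for an open subgroup $U$ of $G$ fixing $v$ and with $\frak{Z}\cap U\subset\ker\chi$, checking compatibility as $U$ shrinks --- which is exactly the paper's $\tilde v$. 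Second, your ``alternative, more transparent'' lifting argument establishes the converse implication (projective in $\Mod_{\field}(G)$ implies projective in $\Mod_{\field}^{\chi}(G)$), which is not the direction the corollary needs; the direction you do need is supplied by the adjunction $\Hom_{\Mod_{\field}(G)}(P,V)\cong\Hom_{\Mod_{\field}^{\chi}(G)}(P,V^{\chi})$ together with exactness of $(\cdot)^{\chi}$, i.e.\ by your primary argument. With these two corrections the proof is complete and coincides with the paper's.
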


\begin{proof}
For $V \in \Mod_{\field}(G)$ we let $V^\chi:=\{v\in V: zv=\chi(z)v, \forall z \in \frak{Z}\}$ denote the $\chi$-eigenspace. The resulting functor $(\cdot)^\chi$ is right adjoint to the inclusion functor 
$\iota: \Mod_{\field}^\chi(G)\rightarrow \Mod_{\field}(G)$. Once we observe $(\cdot)^\chi$ is exact, $\iota$ preserves projectives (and we are done by \ref{main}). 


The usual averaging argument applies: For $v \in V$, fixed by some small enough open subgroup $U$, consider 
$$
\tilde{v}:=\frac{1}{[\frak{Z}: \frak{Z}\cap U]} \cdot {\sum}_{z\in \frak{Z}/\frak{Z}\cap U} \chi(z^{-1})zv \in V^\chi.
$$
If $\gamma: V \rightarrow V'$ is a morphism in $\Mod_{\field}(G)$, and $v \in V$ is a vector for which $\gamma(v) \in V'^\chi$, then clearly $\gamma(\tilde{v})=\gamma(v)$. Thus, if $\gamma$ is surjective, then so is $\gamma^\chi: V^\chi \rightarrow V'^\chi$.
\end{proof}


\section{Representations with a fixed central character}\label{cchar}

In this section we discuss how to adapt the previous arguments to the category of representations with a fixed central character. 
Our setup is the following: The group $G$ is locally pro-$p$ and we fix an open subgroup $K$ which is pro-$p$ in the induced topology. The field $\field$ has characteristic $p$. We pick a closed central subgroup $C\subset Z(G)$ along with a continuous character $\chi: C \rightarrow \field^\times$ and consider the category $\Mod_{\field}^\chi(G)$ of smooth $\field[G]$-modules on which $C$ acts by $\chi$. (We observe that $\chi$ is automatically trivial on any pro-$p$ subgroup of $C$ such as $C \cap K$.)

We will assume $(G,K)$ is $C$-fair in the following sense (cf. Df. \ref{fair}, which is the case where $C$ is the trivial subgroup):

\begin{defn}\label{cfair}
We say the pair $(G,K)$ is $C$-\underline{fair} if $\forall H \in \Omega$ there is an $H' \in \Omega$ such that 
$$
K \cap gH'Cg^{-1} \subsetneq K \cap gHCg^{-1}
$$
for all $g \in G$. (Here we keep the notation $\Omega:=\{\text{open subgroups of $K$}\}$.) Equivalently, $(G/C,KC/C)$ is fair in the sense of Df. \ref{fair}. (To see this note that every open subgroup of $KC/C$ has the form $HC/C$ for an open subgroup $H \subset K$, and vice versa.)
\end{defn}

This implies that $C$ is {\it{not}} open (by taking $H=C \cap K$ in Df. \ref{cfair}). As a result thereof, Lemma \ref{idx} generalizes:

\begin{lem}\label{cidx}
The index $[K:U(C \cap K)]$ becomes arbitrarily large as $U \in \Omega$ varies. 
\end{lem}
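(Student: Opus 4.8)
The plan is to mimic the proof of Lemma \ref{idx}, but carry out the index estimate inside the quotient $K/(C\cap K)$ rather than inside $K$. Note first that $K(C\cap K)=K$, so the notation $[K:U(C\cap K)]$ really means $[K:U(C\cap K)]$ with $U(C\cap K)$ an honest open subgroup of $K$ containing $C\cap K$; equivalently, writing $\bar K=K/(C\cap K)$ and $\bar U$ for the image of $U(C\cap K)$, we have $[K:U(C\cap K)]=[\bar K:\bar U]$. So it suffices to show $[\bar K:\bar U]$ becomes arbitrarily large as $U$ ranges over $\Omega$, i.e. that $\bar K$ is an infinite profinite group and the $\bar U$ range over (a cofinal system of) its open subgroups.

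First I would observe that $\bar K=K/(C\cap K)$ is profinite, being a Hausdorff quotient of the profinite group $K$ by the closed subgroup $C\cap K$ (here $C$ is closed in $G$, hence $C\cap K$ is closed in $K$). Second, $\bar K$ is \emph{infinite}: this is exactly where $C$-fairness is used. Indeed, as remarked just before the lemma, taking $H=C\cap K$ in Definition \ref{cfair} produces an $H'\in\Omega$ with $H'C\cap K\subsetneq C\cap K\cdot(\text{stuff})$ — more precisely $K\cap H'C\subsetneq K\cap (C\cap K)C=K\cap C=C\cap K$ is impossible since $H'C\supseteq C$; the correct reading is that $C$-fairness forces $C$ not to be open, equivalently $C\cap K$ is not open in $K$, equivalently $[K:C\cap K]=|\bar K|$ is infinite. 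Then every open subgroup $\bar U$ of $\bar K$ pulls back to $U(C\cap K)$ for some $U\in\Omega$ (and conversely), so the family of indices $\{[\bar K:\bar U]\}$ is cofinal in the set of indices of open subgroups of the infinite profinite group $\bar K$. Now Lemma \ref{idx} applied to $\bar K$ gives the conclusion.

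Alternatively, and perhaps more cleanly, I would just repeat the three-line argument of Lemma \ref{idx} directly: start with any $U\in\Omega$; since $\bar K$ is infinite there is $u\in U(C\cap K)\setminus(C\cap K)$, i.e. a coset in $\bar K$ not equal to the identity lying in $\bar U$; since $U(C\cap K)$ is open there is $U'\in\Omega$ with $uU'(C\cap K)\subset U(C\cap K)$, and shrinking $U'$ we can ensure the coset of $u$ does not lie in $U'(C\cap K)$, whence $U'(C\cap K)\subsetneq U(C\cap K)$ and $[K:U'(C\cap K)]>[K:U(C\cap K)]$. Iterating makes the index arbitrarily large.

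The only real obstacle is the bookkeeping around the subgroup $U(C\cap K)$ and checking it is genuinely a subgroup of $K$ (which it is, since $C\cap K$ is normalized by nothing in particular — but $U(C\cap K)$ need not be a subgroup unless $C\cap K$ is normal in $K$; however $C\subset Z(G)$ is central, so $C\cap K$ is central in $K$, hence normal, and $U(C\cap K)$ \emph{is} a subgroup). Once centrality of $C$ is invoked this all collapses to Lemma \ref{idx} for the profinite group $K/(C\cap K)$, together with the observation that $C$-fairness forces this quotient to be infinite.
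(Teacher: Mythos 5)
Your proof is correct and rests on the same idea as the paper's: $C$-fairness forces $C\cap K$ to be a closed, non-open (hence infinite-index) central subgroup of $K$, so the open subgroups $U(C\cap K)$ can be made to decrease strictly. The paper runs the element-wise argument of Lemma \ref{idx} directly (pick $u\in U\setminus C$, shrink to $U'$ with $uU'\subset U\setminus C$, conclude $U'(C\cap K)\subsetneq U(C\cap K)$), which is exactly your ``alternative'' version; your primary packaging --- applying Lemma \ref{idx} to the infinite profinite quotient $K/(C\cap K)$ via the correspondence between its open subgroups and the subgroups $U(C\cap K)$ --- is a clean and equally valid reformulation of the same argument.
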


\begin{proof}
Start with any $U \in \Omega$. Pick an element $u \in U\backslash C$ (which is possible as $C$ is not open). 
There is a $U' \in \Omega$ such that $uU'\subset U\backslash C$ (as $C$ is closed). Clearly $U'(C \cap K) \subsetneq U(C \cap K)$ -- otherwise one can write
$u=u'c$ with $u' \in U'$ and $c\in C$ which leads to the contradiction $c \in uU'$.
\end{proof}

Instead of the set $\Omega_v$ from Df. \ref{omegav} we consider the set $\Omega_{C,v}$ of all $U \in \Omega$ fixing $v$ such that
$$
[K:U(C \cap K)]> \dim_{\field} \field[K]v.
$$
By Lemma \ref{cidx} this set $\Omega_{C,v}$ is non-empty. Here $v$ is a vector in an object $V$ of $\Mod_{\field}(K/C \cap K)$. Therefore, for 
$U \in \Omega_{C,v}$, we have a morphism $\varphi_{U,v}: \ind_{U(C \cap K)}^K(\field)\rightarrow V$ in the latter category with nonzero kernel. We again consider their direct sum
$$
\varphi: S={\bigoplus}_{v \in V} {\bigoplus}_{U \in \Omega_{C,v}} \ind_{U(C \cap K)}^K(\field) \longrightarrow V.
$$

The right adjoint of the restriction functor $\Mod_{\field}^\chi(G)\rightarrow \Mod_{\field}(K/C \cap K)$ is given as follows: First we extend $V$ to a representation 
of $KC$ by letting $C$ act via $\chi$. This gives an object $V \boxtimes \chi$ of $\Mod_{\field}^\chi(KC)$ which we induce to a representation $\Ind_{KC}^G(V \boxtimes \chi)$ in
$\Mod_{\field}^\chi(G)$. 

Mimicking Proposition \ref{ind}, we now start with an object $V$ from $\Mod_{\field}^\chi(G)$. We restrict $V$ to $K$ and consider the morphism $\varphi: S \twoheadrightarrow V$ constructed above.

\begin{prop}\label{cind}
The induced morphism $\Ind_{KC}^G(\varphi\boxtimes \chi)$ is surjective if $(G,K)$ is $C$-fair (cf. Df. \ref{cfair}). 
\end{prop}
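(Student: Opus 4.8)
The plan is to mimic the proof of Proposition~\ref{ind} almost verbatim, carrying the central character $\chi$ along for the ride. Start with an arbitrary $F \in \Ind_{KC}^G(V \boxtimes \chi)$ and pick an open subgroup $H \in \Omega$ such that $F$ is fixed by $H$ (viewed inside $G$ via $H \hookrightarrow K \hookrightarrow G$). Since $F$ lies in the induction from $KC$, the function $F$ is determined by its values on a set of representatives for $KC \backslash G$, and for $x \in G$ the value $v_x := F(x)$ lies in $V \boxtimes \chi$; because $F$ is $H$-fixed, $v_x$ is fixed by $K \cap xHCx^{-1}$ in the appropriate sense, and so (forgetting the $C$-action) $v_x \in V$ is fixed by the image of $K \cap xHx^{-1}$, hence by $K \cap xH'Cx^{-1} \cap K = K \cap xH'Cx^{-1}$ when $H'$ is chosen as in Df.~\ref{cfair}.

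Next I would invoke $C$-fairness: for the chosen $H$ pick $H' \in \Omega$ with $K \cap xH'Cx^{-1} \subsetneq K \cap xHCx^{-1}$ for all $x$. Then
$$
[K : K \cap xH'Cx^{-1}] > [K : K \cap xHCx^{-1}] \geq \dim_{\field}\field[K]v_x,
$$
and since $K \cap xH'Cx^{-1}$ contains $(K \cap xH'x^{-1})(C \cap K)$ — indeed I expect these to be equal, or at least the former works just as well — one checks $K \cap xH'Cx^{-1}$ (or the relevant subgroup of the form $U(C\cap K)$) lies in $\Omega_{C,v_x}$. This licenses the summand of $S$ indexed by $v = v_x$ and that $U$. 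As in the claim in Proposition~\ref{ind}, the characteristic function of that subgroup gives an element $f_x$ of the corresponding summand with $\varphi(f_x) = v_x$, and $f_x$ is fixed by $K \cap xH'Cx^{-1}$.

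Then I would assemble these local solutions into a global preimage: choose representatives $R'$ for $KC \backslash G / H'$ and define $A \in \Ind_{KC}^G(S \boxtimes \chi)$ by $A(r') = f_{r'}$ (extended $C$-equivariantly via $\chi$ and $H'$-equivariantly), which makes sense because $f_{r'}$ is fixed by the stabilizer data and $\chi$ is trivial on $C \cap K$ so the $\chi$-twist is consistent on the pro-$p$ part; one must check the double coset combinatorics so that $A$ is well-defined on all of $G$. Writing an arbitrary $g = \kappa c r' h'$ with $\kappa \in K$, $c \in C$, $r' \in R'$, $h' \in H'$, the computation
$$
\Ind_{KC}^G(\varphi \boxtimes \chi)(A)(g) = \kappa\,\chi(c)\,\varphi(f_{r'}) = \kappa\,\chi(c)\,v_{r'} = F(g)
$$
shows surjectivity. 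The main obstacle I anticipate is purely bookkeeping: making sure the subgroups appearing as stabilizers are genuinely of the form $U(C\cap K)$ with $U \in \Omega$ so that membership in $\Omega_{C,v}$ is legitimate, and checking that the $\chi$-twisted extension $A$ is consistently defined across the $KC \backslash G / H'$ double cosets (no cocycle obstruction because $C$ is central and $\chi$ is a character trivial on $C \cap K$). There is no essential new idea beyond Proposition~\ref{ind}; the work is in threading $\chi$ through without contradiction.
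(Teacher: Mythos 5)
Your proposal follows the paper's proof essentially verbatim: reduce everything to the index inequality $[K:K\cap xH'Cx^{-1}]>[K:K\cap xHCx^{-1}]\geq \dim_{\field}\field[K]v_x$ supplied by $C$-fairness, then rerun the double-coset construction of Proposition \ref{ind} over $KC\backslash G/H'$ with the characteristic-function preimages. The only imprecisions are that the exact stabilizer of $v_x=F(x)$ in $V\boxtimes\chi$ is $KC\cap xHx^{-1}$ (elements of $K\cap xHCx^{-1}$ in general only preserve the line $\field v_x$, acting through $\chi$), and the inclusion $(K\cap xH'x^{-1})(C\cap K)\subset K\cap xH'Cx^{-1}$ can be strict rather than an equality -- but both discrepancies point in the direction your inequalities need, so the argument stands and matches the paper's.
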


\begin{proof}
Let $H \in \Omega$ and start with an $F \in \Ind_{KC}^G(V \boxtimes \chi)^H$. Now $v_x:=F(x) \in (V \boxtimes \chi)^{KC \cap xHx^{-1}}$. Choose an $H'$ as in \ref{cfair}. 
To run the proof of Proposition \ref{ind} in this context, it remains to note that
$$
[K: (KC \cap xH'x^{-1})(C \cap K)]>[K: (KC \cap xHx^{-1})(C \cap K)].
$$
If this inequality was an equality we would have 
$$
(KC \cap xH'x^{-1})C=(KC \cap xHx^{-1})C
$$
which contradicts the strict inclusion in \ref{cfair}.
\end{proof}

The rest of the proof of Theorem \ref{main} now extends word for word and gives:

\begin{thm}\label{cmain}
Let $\field$ be a field of characteristic $p$. Suppose $(G,K)$ is $C$-fair for some pro-$p$ open subgroup $K \subset G$. Then 
there are no nonzero projective objects in $\Mod_{\field}^\chi(G)$ for all characters $\chi: C \rightarrow \field^\times$.
\end{thm}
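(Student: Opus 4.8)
The plan is to mimic the proof of Theorem \ref{main} almost verbatim, using the ingredients already assembled in this section for the $C$-fair setting. Since $K$ is pro-$p$ and $\charr(\field)=p$, the character $\chi$ restricted to $C\cap K$ is trivial, so restriction of a $V\in\Mod_{\field}^\chi(G)$ to $K$ lands in $\Mod_{\field}(K/C\cap K)$ and we may form the surjection $\varphi: S\twoheadrightarrow V$ in $\Mod_{\field}(K/C\cap K)$ as constructed above (using the nonempty index sets $\Omega_{C,v}$ from Lemma \ref{cidx}). The key point is that the restriction functor $\Mod_{\field}^\chi(G)\rightarrow\Mod_{\field}(K/C\cap K)$ has right adjoint $V\mapsto\Ind_{KC}^G(V\boxtimes\chi)$, so there is an adjunction square, exactly as in the paragraph after Proposition \ref{ind}:
$$
\begin{tikzcd}
\Hom_{\Mod_{\field}(K/C\cap K)}(V,S) \arrow[r,"\sim"] \arrow[d,dashed,"\varphi_*"] & \Hom_{\Mod_{\field}^\chi(G)}(V,\Ind_{KC}^G(S\boxtimes\chi)) \arrow[d,"\Ind_{KC}^G(\varphi\boxtimes\chi)_*"] \\
\Hom_{\Mod_{\field}(K/C\cap K)}(V,V) \arrow[r,"\sim"] & \Hom_{\Mod_{\field}^\chi(G)}(V,\Ind_{KC}^G(V\boxtimes\chi)).
\end{tikzcd}
$$

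Next I would run the deduction: if $V$ is projective in $\Mod_{\field}^\chi(G)$, then $\Ind_{KC}^G(\varphi\boxtimes\chi)_*$ is surjective because $\Ind_{KC}^G(\varphi\boxtimes\chi)$ itself is surjective by Proposition \ref{cind} ($C$-fairness is used precisely here), and the projectivity of $V$ lets us lift $\Id_V$ through it. Chasing the square, the dashed arrow $\varphi_*$ is surjective, so $\Id_V$ in the bottom-left group is hit, i.e. $\varphi: S\twoheadrightarrow V$ admits a section $\sigma$ in $\Mod_{\field}(K/C\cap K)$ — equivalently in $\Mod_{\field}(K)$, since $C\cap K$ acts trivially on everything in sight.

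Finally I would invoke the second paragraph of the proof of Proposition \ref{case}, which now applies verbatim: $K$ is an infinite pro-$p$ group (it is not finite, since $C$-fairness forces $C$ to be non-open, and in particular $(G,K)$ is fair for $G/C$ hence $K$ is infinite — alternatively take $H=C\cap K$ in Definition \ref{cfair} to see $K$ is not discrete). The section $\sigma$ restricts to $V^K\hookrightarrow S^K$; but $\ker(\varphi_{U(C\cap K),v})$ is nonzero for $U\in\Omega_{C,v}$, and its $K$-fixed part sits inside the line of constants in $\ind_{U(C\cap K)}^K(\field)$, so it equals that line; hence $\varphi$ kills $S^K$, forcing $V^K=0$ and therefore $V=0$ (a nonzero smooth representation of a pro-$p$ group over a field of characteristic $p$ has nonzero fixed vectors). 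The only place requiring genuine care is bookkeeping the appearance of the factor $C\cap K$ throughout — in the index sets, in the summands $\ind_{U(C\cap K)}^K(\field)$, and in checking that ``$K$-fixed $=$ constants'' survives this modification — but this is exactly what Lemma \ref{cidx} and Proposition \ref{cind} were set up to handle, so I anticipate no real obstacle beyond writing ``$C$-fair'' in place of ``fair'' and ``$KC$'' in place of ``$K$'' in the adjunction.
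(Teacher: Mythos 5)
Your proposal is correct and follows essentially the same route as the paper: the same adjunction square for $\Ind_{KC}^G(-\boxtimes\chi)$, surjectivity from Proposition \ref{cind} via $C$-fairness, and the conclusion $V^K=0\Rightarrow V=0$ exactly as in the second paragraph of the proof of Proposition \ref{case}. The extra bookkeeping you flag (triviality of $\chi$ on $C\cap K$, the summands $\ind_{U(C\cap K)}^K(\field)$, and ``$K$-fixed $=$ constants'') is handled correctly and matches the paper's argument.
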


\begin{proof}
In this setup we have the commutative diagram
$$
\begin{tikzcd}
\Hom_{\Mod_{\field}(K/C \cap K)}(V,S) \arrow[r, "\sim"] \arrow[d, dashed, "\varphi_*"]
& \Hom_{\Mod_{\field}^\chi(G)}(V,\Ind_{KC}^G (S\boxtimes \chi)) \arrow[d, "\Ind_{KC}^G(\varphi\boxtimes \chi)_*"] \\
\Hom_{\Mod_{\field}(K/C \cap K)}(V,V) \arrow[r, "\sim"]
& \Hom_{\Mod_{\field}^\chi(G)}(V,\Ind_{KC}^G (V\boxtimes \chi)).
\end{tikzcd}
$$
If $V$ is projective in $\Mod_{\field}^\chi(G)$ we find that $\varphi: S \twoheadrightarrow V$ admits a section $\sigma$. Since $\varphi_{U,v}$ vanishes on the constant functions, 
$\varphi$ must vanish on $S^K$ which contains $\sigma(V^K)$. Thus $V^K=0 \Longrightarrow V=0$.
\end{proof}


\section{Examples of fair pairs}

A fair group is obviously not discrete (take $H=\{e\}$ in Def. \ref{fair}). In this section we give some basic examples of fair groups. 

\subsection{Groups with a non-discrete center}

Let $G$ be a locally profinite group with non-discrete center $Z(G)$. Then $(G,K)$ is fair for \emph{any} profinite open subgroup $K \subset G$. 

\begin{proof}
Let $H \in \Omega$. Pick a $z \in Z(G) \cap H$, $z\neq e$. There exists an $H' \in \Omega$, contained in $H$, such that 
$$
e \notin z(Z(G) \cap H').
$$
Now, for the sake of contradiction, suppose there is a $g \in G$ for which $K \cap gH'g^{-1}=K \cap gHg^{-1}$. Intersecting both sides with $Z(G)$ yields the equality
$Z(G) \cap H'=Z(G) \cap H$. For instance,
$$
Z(G) \cap (K \cap gHg^{-1})=K \cap g(Z(G) \cap H)g^{-1}=K \cap (Z(G) \cap H)=Z(G) \cap H.
$$
However, the equality $Z(G) \cap H'=Z(G) \cap H$ implies $e \in z(Z(G) \cap H')$. This contradicts the properties of $H'$. 
\end{proof}

\subsection{Reductive groups over local fields}

Let $G=\bf{G}(\frak{F})$ for a connected reductive group $\bf{G}$ defined over a (non-archimedean) local field $\frak{F}$. (We emphasize that we allow $\frak{F}$ to have \emph{positive} characteristic.) Then $(G,K)$ is fair for any compact open subgroup $K$. 

\begin{proof}
It suffices to show $(G,K)$ is fair for a cofinal system of compact open subgroups, so we may assume $K$ is a principal congruence subgroup. As in \cite{SS24}, we pick a special vertex $x_0$ in the Bruhat-Tits building and consider the principal congruence subgroups $K_m$ of the special parahoric subgroup $K_0$. We have implicitly fixed a maximal $\frak{F}$-split
subtorus $\bf{S}$ such that $x_0$ lies in the associated apartment. We let $Z=\bf{Z}(\frak{F})$ denote the $\frak{F}$-points of the centralizer $\bf{Z}$ of $\bf{S}$, and $Z^+$ is the usual contracting monoid (see \cite{SS24} for more details). By the Cartan decomposition $G=K_0Z^+K_0$ it suffices to show that $\forall n$ there is an $n'>n$ such that
$$
K_m \cap zK_{n'}z^{-1} \subsetneq K_m \cap zK_{n}z^{-1} 
$$
for all $z \in Z^+$. This follows immediately from the Iwahori factorization of $K_m \cap zK_{n}z^{-1}$ as in \cite{SS24}.
\end{proof}

In the next subsection we give an alternative proof which works in a more general setup. 

\subsection{Groups with a weak Cartan decomposition}

We say $G$ has a \emph{weak Cartan decomposition} if there is a compact subset $\mathcal{C} \subset G$, and a non-discrete subgroup $S \subset G$, such that
$$
G=\mathcal{C} \cdot Z_G(S) \cdot \mathcal{C}
$$
where $Z_G(S)$ denotes the centralizer of $S$ in $G$. 

\begin{thm}\label{weak_Cartan_implies_fairness}
Suppose that $G$ has a weak Cartan decomposition.
Then $(G,K)$ is fair for any compact open subgroup $K \subset G$. 
\end{thm}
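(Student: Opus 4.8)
The plan is to reduce the general weak Cartan decomposition $G = \mathcal{C} \cdot Z_G(S) \cdot \mathcal{C}$ to a statement about a single cofinal family of open subgroups of $K$, much as in the reductive case above where the Cartan decomposition $G = K_0 Z^+ K_0$ was used. First I would fix a compact open subgroup $K \subset G$ and, using that it suffices to check fairness for a cofinal family (since any two compact open subgroups are commensurable, and shrinking $K$ can only help verify (\ref{HYP})), replace $K$ by a member of a convenient neighbourhood basis $\{K_m\}$ of the identity consisting of compact open subgroups with $K_{m+1} \subset K_m$. The key observation is that because $\mathcal{C}$ is compact, the set $\mathcal{C}$ is covered by finitely many cosets of $K_m$, so conjugation by elements of $\mathcal{C}$ moves the $K_n$'s around only within a fixed commensurability-bounded range; consequently, up to passing to a smaller subgroup, verifying the strict inclusion $K_m \cap g K_{n'} g^{-1} \subsetneq K_m \cap g K_n g^{-1}$ for all $g \in G$ reduces to verifying it for $g \in Z_G(S)$.

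The second, and main, step is to establish the strict inclusion for $g = z \in Z_G(S)$. Here I would exploit the non-discrete subgroup $S$: since $z$ centralizes $S$, conjugation by $z$ fixes $S$ pointwise, hence fixes $S \cap K_m$ and every $S \cap K_n$. So $K_m \cap z K_n z^{-1} \supseteq S \cap K_n$ for every $n$. Now, because $S$ is non-discrete, $\overline{S}$ (its closure, also non-discrete) meets every $K_n$ in a nontrivial subgroup, and moreover the intersections $\overline{S} \cap K_n$ form a strictly decreasing chain for a cofinal subsequence of $n$ — this is exactly the content of Lemma \ref{idx} applied to the infinite profinite group $\overline{S} \cap K_m$. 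Thus, given $n$, one can choose $n' > n$ so that $\overline{S} \cap K_{n'} \subsetneq \overline{S} \cap K_n$; intersecting with $S$ if one insists on working inside $S$ itself, or more cleanly replacing $S$ by $\overline{S}$ from the outset (noting $Z_G(S) = Z_G(\overline{S})$, so the weak Cartan decomposition persists), this yields an element $s \in K_n \setminus K_{n'}$ with $s \in \overline{S}$, hence $s \in z K_n z^{-1} \setminus z K_{n'} z^{-1}$, giving the desired strictness uniformly in $z$.

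The third step is the bookkeeping that patches the $z$-case back to arbitrary $g$. Writing $g = c_1 z c_2$ with $c_i \in \mathcal{C}$, one has $g K_n g^{-1} = c_1 (z (c_2 K_n c_2^{-1}) z^{-1}) c_1^{-1}$, and since $\{c_2 K_n c_2^{-1} : c_2 \in \mathcal{C}\}$ and $\{c_1^{-1} K_m c_1 : c_1 \in \mathcal{C}\}$ are each squeezed between two fixed members of the family $\{K_j\}$ (compactness of $\mathcal{C}$ plus the fact that the $K_j$ form a basis of neighbourhoods of $e$), one can pick indices $a \le b$ depending only on $m$ and an offset $\delta$ depending only on $\mathcal{C}$ such that the previously established strict inclusion for indices $n, n'$ with $n' \ge n + \delta$, conjugated by $z$ and then by $c_1$, survives the sandwiching. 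I expect this last uniformity argument — making the index shift genuinely independent of $g$ rather than merely of $z$ — to be the main obstacle, and the clean way around it is probably to phrase everything in terms of the Chabauty topology on $\mathcal{S}(G)$: the conjugates $\{g K_n g^{-1}\}$ accumulate only on non-discrete subgroups precisely because of the $S$-factor, and then invoke (or reprove inline) the equivalence, attributed to Caprace and discussed in Section \ref{chab}, between fairness of $(G,K)$ and the absence of discrete subgroups in the closure of the conjugacy class of $K$.
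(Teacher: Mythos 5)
Your second step contains the key idea---a non-identity element of $S$ lying deep in the filtration is fixed by conjugation by any $z \in Z_G(S)$, and therefore witnesses the strict inclusion uniformly in $z$---and this is indeed the engine of the paper's proof. But your third step, which you yourself flag as ``the main obstacle,'' is where the argument has a genuine gap, and neither of your two proposed fixes is carried out. The subgroup-level sandwiching cannot work as stated: even if $c_2 K_n c_2^{-1}$ is squeezed between two fixed members of your filtration uniformly in $c_2 \in \mathcal{C}$, conjugating by $z$ destroys this, since $z$ need not normalize any $K_j$; so the reduction from arbitrary $g$ to $g \in Z_G(S)$ does not go through at the level of subgroups, and a uniform offset $\delta$ with $n' \ge n + \delta$ is the wrong quantifier structure in any case (what is needed depends on a chosen element, not additively on $n$). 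The Chabauty route---showing every limit of $g_i K g_i^{-1}$ is non-discrete and invoking Proposition \ref{caprace}---would also close the gap, but you only gesture at it without verifying the non-discreteness of the limits.

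The paper closes exactly this gap by staying at the level of elements throughout. By the tube lemma (compactness of $\mathcal{C}$) there are neighbourhoods $V \subseteq U$ of $e$ in $S$ with $cUc^{-1}\subseteq K$ and $c^{-1}Vc\subseteq H$ for all $c\in\mathcal{C}$. One fixes a single $\sigma\in V\setminus\{e\}$ and sets $\Sigma=\{c^{-1}\sigma c: c\in\mathcal{C}\}$, a compact set not containing $e$, then chooses $H'\subseteq H$ open with $H'\cap\Sigma=\varnothing$. For $g=c_1zc_2$ the element $c_1\sigma c_1^{-1}$ lies in $K\cap gHg^{-1}$ (using $z\sigma z^{-1}=\sigma$) but not in $gH'g^{-1}$ (since $c_2^{-1}\sigma c_2\in\Sigma$). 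Note that $H'$ depends only on the one chosen element $\sigma$, not on any index shift uniform over $\mathcal{C}$-conjugation. If you replace your third step by this element-level argument---take your own witness $s \in \overline{S}\cap K_n\setminus\{e\}$ deep enough that all its $\mathcal{C}$-conjugates land in $K$ and in $H$, and then choose $H'$ to miss the compact set $\{c^{-1}sc : c\in\mathcal{C}\}$---your proof becomes essentially the paper's.
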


\begin{rem}
Such $G$ are reminiscent of groups with a \emph{Cartan-like decomposition}, cf. \cite[Df.~3.1]{CW23}, which means $G=\mathcal{C}\cdot A\cdot \mathcal{C}$ for a compact open \emph{subgroup} $\mathcal{C} \subset G$ and a set $A$ of representatives for the double cosets
$\mathcal{C} \backslash G/\mathcal{C}$.
\end{rem}

\begin{proof}
We start by recalling the \emph{tube lemma}: Let $X,Y$ be topological spaces, and assume $Y$ is compact. Suppose $\mathcal{V} \subset X\times Y$ is an open subset containing a slice 
$\{x\} \times Y$. Then there exists an open subset $U \subset X$ such that $\mathcal{V}\supset U \times Y \supset \{x\}\times Y$.

For lack of a reference we indicate a proof hereof: Write $\mathcal{V}=\bigcup_{i\in I} U_i \times V_i$ as a union of open boxes. Then 
$Y=\bigcup_{i\in I'}V_i$ where $I'=\{i\in I: x \in U_i\}$, which we refine to a finite subcovering $Y=V_{i_1}\cup \cdots \cup V_{i_N}$. One immediately checks that 
$U=U_{i_1}\cap \cdots \cap U_{i_N}$ satisfies the requirements. 

Consider the continuous map 
\begin{align*}
\xi: S \times \mathcal{C}  & \longrightarrow G \\
 (s,c) & \longmapsto csc^{-1}.
\end{align*}
It maps the slice $\{e\}\times \mathcal{C}$ to $\{e\}$. In particular $\xi^{-1}(K) \supset \{e\}\times \mathcal{C}$, and therefore $\xi^{-1}(K)$ contains a tube $U \times \mathcal{C}$ for some open neighborhood $U \subset S$ of $e$. This means:
\begin{equation}\label{incK}
\{csc^{-1}: s\in U, \: c \in \mathcal{C}\} \subset K.
\end{equation}
Similarly, given an $H$ as in Definition \ref{fair}, the same argument applied to the map $(s,c) \mapsto c^{-1}sc$ yields an open neighborhood 
$V \subset S$ of $e$ such that
\begin{equation}\label{incH}
\{c^{-1}sc: s\in V, \: c \in \mathcal{C}\} \subset H.
\end{equation}
We may assume $V \subset U$. 

Once and for all we pick an element $\sigma \in V-\{e\}$ (which exists since $S$ is non-discrete) and introduce the compact subset
$$
\Sigma:=\{c^{-1}\sigma c: c \in \mathcal{C}\} \subset G.
$$
Since the complement $G-\Sigma$ is an open neighborhood of $e$ there is an open subgroup $H' \subsetneq H$ with $H' \cap \Sigma=\varnothing$. We claim this $H'$ works in \ref{fair}. If not, there is a $g \in G$ for which we get an equality
$$
K \cap gH'g^{-1}=K \cap gHg^{-1}.
$$
Write $g=czc'$ according to the weak Cartan decomposition ($c,c' \in \mathcal{C}$ and $z \in Z_G(S)$). Then:
\begin{itemize}
\item[i.] $c\sigma c^{-1} \in K$ by (\ref{incK});
\item[ii.] $c\sigma c^{-1}=gc'^{-1}z^{-1}\sigma zc'g^{-1}=gc'^{-1}\sigma c'g^{-1} \in gHg^{-1}$ by (\ref{incH}).
\end{itemize}
(In part ii we used the fact that $z$ and $\sigma$ commute.) In summary $c\sigma c^{-1}\in K \cap gHg^{-1}$. By our hypothesis on $g$ this element 
$c\sigma c^{-1}$ must therefore lie in $K \cap gH'g^{-1}$. Consequently
$$
c'^{-1}\sigma c'=g^{-1}cz\sigma z^{-1}c^{-1}g=g^{-1}c\sigma c^{-1}g \in g^{-1}(K \cap gH'g^{-1})g=g^{-1}Kg \cap H' \subset H'.
$$
On the other hand $c'^{-1}\sigma c' \in \Sigma$. This contradicts the assumption that $H' \cap \Sigma=\varnothing$.
\end{proof}

This applies in particular to {\it{covering}} groups of $\bf{G}(\frak{F})$, as discussed in \cite[Sect.~3.1]{FP22} for example. They consider central extensions 
$\widetilde{\bf{G}(\frak{F})}$ of $\bf{G}(\frak{F})$ by a finite abelian group. In \cite[Sect.~5.4]{FP22} the Cartan decomposition of $\bf{G}(\frak{F})$ is lifted to 
a Cartan decomposition of $\widetilde{\bf{G}(\frak{F})}$. See \cite[Thm.~5.3]{FP22} for instance. 

\begin{thm}
Suppose that $G$ has a weak Cartan decomposition $G=\mathcal{C} \cdot Z_G(S) \cdot \mathcal{C}$.
Let $A$ be a closed central subgroup of $G$ such that $A\cap S$ is not open in $S$, and let $\chi:A\rightarrow \field^\times$ be a smooth character.
Then there are no nonzero projectives in $\Mod_{\field}^\chi(G)$.
\end{thm}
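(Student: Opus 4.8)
The plan is to deduce the statement from Theorem \ref{cmain} applied with $C=A$, so the entire task reduces to checking that the pair $(G,K)$ is $A$-fair in the sense of Definition \ref{cfair} for a suitable $K$. I would first fix a pro-$p$ compact open subgroup $K\subset G$ (which exists since $G$ is locally pro-$p$) and show that $(G,K)$ is $A$-fair, equivalently that $(G/A,KA/A)$ is fair. Once this is established, Theorem \ref{cmain} gives immediately that $\Mod_{\field}^\chi(G)$ has no nonzero projectives for every smooth $\chi\colon A\to\field^\times$. Thus all the content is in the verification of $A$-fairness, and for that I would recycle the proof of Theorem \ref{weak_Cartan_implies_fairness} almost verbatim, inserting the central subgroup $A$ at the appropriate places.

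In detail: run the tube lemma exactly as in the proof of Theorem \ref{weak_Cartan_implies_fairness}. Applied to $(s,c)\mapsto csc^{-1}$ it produces an open neighborhood $U$ of $e$ in $S$ with $\{csc^{-1}:s\in U,\ c\in\mathcal{C}\}\subset K$, i.e. (\ref{incK}); applied to $(s,c)\mapsto c^{-1}sc$ for a given $H\in\Omega$ it produces an open neighborhood $V\subset U$ of $e$ in $S$ with $\{c^{-1}sc:s\in V,\ c\in\mathcal{C}\}\subset H$, i.e. (\ref{incH}). The one modification is in the choice of test element: since $A\cap S$ is not open in $S$, the neighborhood $V$ is not contained in $A$, so I may pick $\sigma\in V$ with $\sigma\notin A$. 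Put $\Sigma:=\{c^{-1}\sigma c:c\in\mathcal{C}\}$, a compact set. Because $A$ is central, hence normal, and $\sigma\notin A$, we have $e\notin\Sigma A$; and $\Sigma A$ is closed, being the product of a compact set and a closed set in a topological group. Hence $G-\Sigma A$ is an open neighborhood of $e$, so it contains an open subgroup, and intersecting that open subgroup with $H$ yields an open subgroup $H'\subset H$ of $K$ with $H'\cap\Sigma A=\varnothing$.

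It remains to check that this $H'$ works in Definition \ref{cfair}, that is, $K\cap gH'Ag^{-1}\subsetneq K\cap gHAg^{-1}$ for all $g\in G$; the inclusion $\subseteq$ is clear, so only strictness is at issue. Assume for contradiction that $K\cap gH'Ag^{-1}=K\cap gHAg^{-1}$ for some $g$, and write $g=czc'$ with $c,c'\in\mathcal{C}$ and $z\in Z_G(S)$. Then $c\sigma c^{-1}\in K$ by (\ref{incK}), while $c\sigma c^{-1}=g\,(c'^{-1}\sigma c')\,g^{-1}\in gHg^{-1}\subseteq gHAg^{-1}$ by (\ref{incH}) (using that $z$ commutes with $\sigma$, exactly as in the proof of Theorem \ref{weak_Cartan_implies_fairness}). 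Hence $c\sigma c^{-1}\in K\cap gHAg^{-1}=K\cap gH'Ag^{-1}$, which forces $c'^{-1}\sigma c'=g^{-1}(c\sigma c^{-1})g\in H'A$. But $c'^{-1}\sigma c'\in\Sigma$, so $\Sigma\cap H'A\neq\varnothing$, contradicting $H'\cap\Sigma A=\varnothing$. This proves $(G,K)$ is $A$-fair, and the theorem follows from Theorem \ref{cmain}.

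The step I expect to be the main (and essentially only) obstacle is getting the whole central subgroup $A$ past the identity: one must ensure that the compact, conjugation-stable set $\Sigma$ together with all of $A$ still misses a neighborhood of $e$. This is exactly where the hypothesis ``$A\cap S$ is not open in $S$'' enters — it is what lets $\sigma$ be chosen outside $A$ — together with the elementary but essential fact that $\Sigma A$ is closed because $\Sigma$ is compact and $A$ is closed. Everything else is a routine transcription of the proof of Theorem \ref{weak_Cartan_implies_fairness}, replacing $H,H'$ by $HA,H'A$ wherever they are conjugated and intersected with $K$; and one should recall to take $K$ pro-$p$ at the outset so that Theorem \ref{cmain} is applicable.
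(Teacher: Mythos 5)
Your proof is correct, but it takes a different route from the paper's. The paper's proof is a three-line reduction: it observes that the weak Cartan decomposition $G=\mathcal{C}\cdot Z_G(S)\cdot\mathcal{C}$ descends to a weak Cartan decomposition $G/A=\overline{\mathcal{C}}\cdot Z_{G/A}(\overline{S})\cdot\overline{\mathcal{C}}$ (the hypothesis that $A\cap S$ is not open in $S$ being exactly what guarantees $\overline{S}$ is non-discrete), then invokes Theorem \ref{weak_Cartan_implies_fairness} \emph{for the quotient group} $G/A$ to get that $(G/A,KA/A)$ is fair, which is the definition of $A$-fairness, and finishes with Theorem \ref{cmain}. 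You instead never pass to $G/A$: you re-run the tube-lemma argument upstairs in $G$, choosing $\sigma\in V\setminus A$ (legitimate, since $V\subset A$ would force $A\cap S$ to contain an open subgroup of $S$, hence be open) and replacing the condition $H'\cap\Sigma=\varnothing$ by $H'\cap\Sigma A=\varnothing$, which is achievable because $\Sigma A$ is closed (compact times closed) and misses $e$ (normality of $A$). All the subsequent manipulations check out, including the final step where $\Sigma\cap H'A\neq\varnothing$ is correctly seen to contradict $H'\cap\Sigma A=\varnothing$. The trade-off: the paper's argument is shorter and more modular, reusing Theorem \ref{weak_Cartan_implies_fairness} as a black box at the cost of a small point about the subspace topology on $\overline{S}\subset G/A$; yours is longer but entirely self-contained in $G$, avoiding quotient topologies altogether, and it makes visible exactly where the hypothesis on $A\cap S$ and the closedness/centrality of $A$ are used. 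Both are valid proofs.
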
 
    \begin{proof}
    By Theorem \ref{weak_Cartan_implies_fairness} and Theorem \ref{cmain}, it is enough to show that the weak Cartan decomposition of $G$ induces a weak Cartan decomposition on $G/A$.
    Given a set $B\subset G$, we denote by $\overline{B}\subset G/A$ its image under the quotient map.
    Clearly, $G/A=\overline{\mathcal{C}}\cdot \overline{Z_G(S)}\cdot \overline{\mathcal{C}}$, and $\overline{\mathcal{C}}$ is compact.
    Since $\overline{Z_G(S)}\subset Z_{G/A}(\overline{S})$, we have $G/A=\overline{\mathcal{C}}\cdot Z_{G/A}(\overline{S})\cdot \overline{\mathcal{C}}$.
    By assumption, $A\cap S$ is not open in $S$, hence $\overline{S}$ is not discrete in $G/A$.
    Therefore, $G/A=\overline{\mathcal{C}}\cdot Z_{G/A}(\overline{S})\cdot \overline{\mathcal{C}}$ is a weak Cartan decomposition of $G/A$.
    \end{proof}


\section{Other exact structures}\label{exact}

One solution addressing the lack of projective objects in $\Mod_{\field}(G)$ is to endow this category with other exact structures. The survey \cite{Buh10} serves as our main reference for the basic notions and properties of exact categories. 

Let $\EE_\text{max}$ be the class of all short exact sequences $0 \rightarrow V'\rightarrow V \rightarrow V''\rightarrow 0$ in $\Mod_{\field}(G)$. For a fixed open subgroup $U \subset G$ we consider the class $\EE_U \subset \EE_\text{max}$ of all such sequences which split in $\Mod_{\field}(U)$. The axioms in \cite[Df.~2.1]{Buh10} are easy to verify. This is precisely 
\cite[Exc.~5.3]{Buh10} applied to the restriction functor $\Mod_{\field}(G)\rightarrow \Mod_{\field}(U)$ (with split exact sequences). Thus $(\Mod_{\field}(G),\EE_U)$ is an exact category.

\begin{rem}\label{depu}
If $U' \subset U$ are compact open subgroups of $G$ with index $[U:U']\in \field^\times$ an immediate averaging argument shows that $\EE_{U'}=\EE_{U}$. (If $V \rightarrow V''$ admits a 
$U'$-equivariant section $\sigma$ then $\widetilde{\sigma}(v)=\frac{1}{[U:U']}\sum_{u\in U' \backslash U} u^{-1}\sigma(uv)$ defines a $U$-equivariant section.) 
\end{rem}

An admissible monic is a morphism $\alpha: V' \rightarrow V$ in $\Mod_{\field}(G)$ which admits a $U$-equivariant retraction (a morphism $\rho: V \rightarrow V'$ in 
$\Mod_{\field}(U)$ such that $\rho \circ \alpha=\Id_{V'}$). We use the notation $\rightarrowtail$ for admissible monics.

Similarly, an admissible epic is a morphism $\beta: V \rightarrow V''$ in $\Mod_{\field}(G)$ which admits a $U$-equivariant section (a morphism 
$\sigma: V'' \rightarrow V$ in $\Mod_{\field}(U)$ such that $\beta \circ \sigma=\Id_{V''}$). We use the notation $\twoheadrightarrow$ for admissible epics.

Projective and injective objects of $(\Mod_{\field}(G),\EE_U)$ are defined in \cite[Df.~11.1]{Buh10}. For example, $P$ is projective if every admissible epic
$\beta: V\twoheadrightarrow V''$ induces a surjective map $\beta_*: \Hom_{\Mod_{\field}(G)}(P,V)\rightarrow \Hom_{\Mod_{\field}(G)}(P,V'')$. (See also \cite[Prop.~11.3]{Buh10}.)

\begin{prop}\label{injpr}
Let $U$ be an open subgroup of $G$. Then the following holds:
\begin{itemize}
\item[(a)] The exact category $(\Mod_{\field}(G),\EE_U)$ has enough projectives (see \cite[Df.~11.9]{Buh10}) and enough injectives.
\item[(b)] The projective objects are precisely the direct summands of representations of the form $\ind_U^G(W)$ with $W \in \Mod_{\field}(U)$. The injective objects are the summands of
$\Ind_U^G(W)$ as $W$ varies.
\item[(c)] If $G$ is compact, $(\Mod_{\field}(G),\EE_U)$ is a Frobenius category (see \cite[Sect.~13.4]{Buh10}). 
\end{itemize}
\end{prop}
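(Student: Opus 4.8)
The plan is to treat the three parts of the statement together, since they are all consequences of the standard fact that, for an exact structure defined as the preimage of the split exact structure under an exact functor with an exact adjoint on both sides, projectives and injectives are controlled by that adjoint. Here the relevant functor is restriction $\Mod_{\field}(G)\to\Mod_{\field}(U)$, which has the exact left adjoint $\ind_U^G(-)$ (compact induction) and the exact right adjoint $\Ind_U^G(-)$; in $\Mod_{\field}(U)$ every short exact sequence is $\EE_U$-admissible (i.e.\ we use the split structure there, but since all sequences over $U$ are allowed as admissible only the split ones enter the definition of $\EE_U$ — the point is that $\Hom_{\Mod_{\field}(U)}(W,-)$ and $\Hom_{\Mod_{\field}(U)}(-,W)$ send split sequences to exact sequences of abelian groups). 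First I would record the two adjunction isomorphisms
$$
\Hom_{\Mod_{\field}(G)}(\ind_U^G(W),V)\cong\Hom_{\Mod_{\field}(U)}(W,V|_U),\qquad
\Hom_{\Mod_{\field}(G)}(V,\Ind_U^G(W))\cong\Hom_{\Mod_{\field}(U)}(V|_U,W),
$$
both natural in $V$, together with the observation that $\ind_U^G(-)$ and $\Ind_U^G(-)$ are exact (for $\ind$ one uses that it is a direct limit of finite inductions, or simply that it is left adjoint to an exact functor possessing its own right adjoint; for $\Ind$ one uses exactness of the restriction on the other side).

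\emph{Part (b).} To see that every $\ind_U^G(W)$ is $\EE_U$-projective: given an admissible epic $\beta:V\twoheadrightarrow V''$, by definition $\beta|_U$ admits a $U$-equivariant section, hence $\beta|_U$ is a split epimorphism in $\Mod_{\field}(U)$, so $\Hom_{\Mod_{\field}(U)}(W,\beta|_U)$ is surjective; transporting across the first adjunction isomorphism, $\Hom_{\Mod_{\field}(G)}(\ind_U^G(W),\beta)$ is surjective, which is exactly $\EE_U$-projectivity of $\ind_U^G(W)$. Summands of projectives are projective, so all summands of $\ind_U^G(W)$ qualify. Conversely, if $P$ is $\EE_U$-projective, consider the counit $\varepsilon:\ind_U^G(P|_U)\to P$; I would check that $\varepsilon$ is an admissible epic by exhibiting a $U$-equivariant section, namely the map $p\mapsto \mathrm{char}_U\otimes p$ (the obvious splitting of compact induction after restriction to $U$), whence $\EE_U$-projectivity of $P$ forces $\varepsilon$ to split in $\Mod_{\field}(G)$, realizing $P$ as a direct summand of $\ind_U^G(P|_U)$. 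The injective half is entirely dual, using $\Ind_U^G$, the unit $V\to\Ind_U^G(V|_U)$, and its $U$-equivariant retraction $F\mapsto F(e)$.

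\emph{Part (a).} Enough projectives: for any $V$, the counit $\varepsilon:\ind_U^G(V|_U)\twoheadrightarrow V$ is an admissible epic with $\EE_U$-projective source by part (b), which is the definition of having enough projectives. Dually the unit $V\rightarrowtail\Ind_U^G(V|_U)$ is an admissible monic into an $\EE_U$-injective object, giving enough injectives.

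\emph{Part (c).} When $G$ is compact, $U$ is open of finite index, so $\ind_U^G$ and $\Ind_U^G$ agree (finite induction equals finite coinduction), hence by part (b) the classes of $\EE_U$-projectives and $\EE_U$-injectives coincide; combined with part (a) this says exactly that $(\Mod_{\field}(G),\EE_U)$ is a Frobenius category in the sense of \cite[Sect.~13.4]{Buh10}. The main obstacle I anticipate is purely bookkeeping: verifying carefully that the counit and unit are genuinely \emph{admissible} morphisms for $\EE_U$ (that is, producing the $U$-equivariant section/retraction explicitly and checking $U$-equivariance), and confirming exactness of $\ind_U^G$ in the possibly non-compact case; once those are in hand, everything else is a formal adjunction argument and no computation is required.
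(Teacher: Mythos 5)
Your proposal is correct and follows essentially the same route as the paper: identify $\EE_U$ as the pullback of the split structure along restriction, use the two adjunctions to show $\ind_U^G(W)$ is projective and $\Ind_U^G(W)$ is injective, exhibit the explicit $U$-equivariant section of the counit and retraction of the unit to get enough projectives/injectives, and deduce (c) from $\ind_U^G=\Ind_U^G$ in the compact case. Your explicit splitting of the counit to realize an arbitrary projective as a summand of $\ind_U^G(P|_U)$ completes the converse half of (b), which the paper leaves implicit; the aside about exactness of $\ind_U^G$ is not actually needed for any step.
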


\begin{proof}
For part (a) let $X$ be an arbitrary object of $\Mod_{\field}(G)$ and consider the counit of adjunction
\begin{align*}
B: \ind_U^G(X|_U) &\longrightarrow X \\
f & \longmapsto {\sum}_{g \in U \backslash G} g^{-1}f(g).
\end{align*}
This is an admissible epic. Indeed $B$ has a $U$-equivariant section $x \mapsto f_x$, where $f_x$ denotes the function in $\ind_U^G(X|_U)$ supported on $U$ and sending the identity to $x$. Also, $\ind_U^G(X|_U)$ is projective in $(\Mod_{\field}(G),\EE_U)$ for the following reason. Any admissible epic $\beta: V\twoheadrightarrow V''$ as above induces a surjective map
$$
\Hom_{\Mod_{\field}(U)}(X|_U,V) \longrightarrow \Hom_{\Mod_{\field}(U)}(X|_U,V'')
$$
since $\sigma \circ (-)$ is a right inverse. By Frobenius reciprocity this amounts to $\beta_*$ being surjective. We conclude that for any $X$ there is an admissible epic $P \twoheadrightarrow X$ with $P$ projective.

To show $(\Mod_{\field}(G),\EE_U)$ also has enough injectives we consider the unit of adjunction
\begin{align*}
A: X &\longrightarrow \Ind_U^G(X|_U) \\
 x &\longmapsto [g \mapsto gx].
\end{align*}
This is an admissible monic. A $U$-equivariant retraction is given by evaluation at the identity. It remains to note that 
$\Ind_U^G(X|_U)$ is injective. So, let $\alpha: V' \rightarrowtail V$ be an admissible monic as above. Again, by Frobenius reciprocity for the full induction, it suffices to observe that 
pulling back via $\alpha$ induces a surjective map
$$
\Hom_{\Mod_{\field}(U)}(V, X|_U)\longrightarrow \Hom_{\Mod_{\field}(U)}(V', X|_U)
$$
 since $(-)\circ \rho$ is a left inverse. Hence any $X$ admits an admissible monic $X \rightarrowtail I$ into an injective $I$. 
 
 In the previous proofs of the projectivity of $\ind_U^G(X|_U)$ and the injectivity of $\Ind_U^G(X|_U)$ there was nothing special about $X|_U$. We can run the exact same arguments for any $U$-representation $W$ instead of $X|_U$. Altogether this proves parts (a) and (b). 
 
 Part (c) follows immediately from (b) since $\ind_U^G(W)=\Ind_U^G(W)$ when $G$ is compact. 
\end{proof}

Any Frobenius category has an associated stable category, which is triangulated. As described in \cite[Sect.~6]{Kel96} this construction can be mimicked in greater generality. 

In our setup the injectively stable category $S_\text{in}(G,U)$ has the same objects as $\Mod_{\field}(G)$ but the morphisms are 
$$
\Hom_{S_\text{in}(G,U)}(V_1,V_2)=\Hom_{\Mod_{\field}(G)}(V_1,V_2)/\mathcal{I}(V_1,V_2)
$$
where $\mathcal{I}(V_1,V_2)$ is the space of morphisms $V_1\rightarrow V_2$ which factor through an injective object (with respect to the exact structure $\EE_U$). 

The suspension functor $T: S_\text{in}(G,U) \longrightarrow S_\text{in}(G,U)$ has the property that there is a short exact sequence
$$
0 \rightarrow X \overset{A}{\rightarrow} \Ind_U^G(X|_U) \rightarrow T(X) \rightarrow 0
$$
in $\EE_U$ for all $X$ (where $A$ is the adjunction map which appeared in the proof of Proposition \ref{injpr}). In other words $T(X)$ is the cokernel of $A$.
By \cite[Thm.~6.2]{Kel96} this gives $S_\text{in}(G,U)$ the structure of a {\it{suspended}} category (see \cite[Sect.~7]{Kel96}). Essentially what this means is it satisfies all the axioms for a triangulated category except that the suspension functor need not be an equivalence.

Similarly, the projectively stable category $S_\text{pr}(G,U)$ is defined by modding out morphisms which factor through a projective object. In this case there is a functor 
 $\Omega: S_\text{pr}(G,U) \longrightarrow S_\text{pr}(G,U)$ such that there is an exact sequence
 $$
0 \rightarrow \Omega(X) \rightarrow \ind_U^G(X|_U) \overset{B}{\rightarrow} X \rightarrow 0
$$
in $\EE_U$ for all $X$ (with $B$ as in the proof of Proposition \ref{injpr}). Thus $\Omega(X)$ is the kernel of $B$.

When $G$ is compact we have a Frobenius category. In this case $S_\text{in}(G,U)=S_\text{pr}(G,U)$ and $T$, $\Omega$ are mutually quasi-inverse equivalences of categories. This gives a 
triangulated category $S(G,U)$.

When $G$ is finite and $U=\{e\}$ the above construction yields the stable module category $S(G)$ which is of pivotal importance in modular representation theory
(when $|G|$ is divisible by the characteristic of $\field$). See \cite{BIK12} for example. 

We are optimistic that $S_\text{in}(G,U)$ and $S_\text{pr}(G,U)$ will likewise play a central role in modular representation theory for non-compact groups, and we hope to explore this in continuation of this paper. 


\section{An interpretation of fairness in the Chabauty space}\label{chab}

The set $\mS(G)=\{\text{closed subgroups of $G$}\}$ carries a natural topology which makes it a compact Hausdorff space; see \cite{Cha50}. The group $G$ acts on  
$\mS(G)$ by conjugation, and for a $K \in \mS(G)$ we will consider the $G$-orbit $\{gKg^{-1}: g \in G\}\subset \mS(G)$ and its closure. 

We owe the following observation, and its proof, entirely to Pierre-Emmanuel Caprace. We are very grateful to him for allowing us to include his argument here. 

\begin{prop}\label{caprace}
The pair $(G,K)$ is fair if and only if
$$
\overline{\{gKg^{-1}: g \in G\}} \subset \{\text{non-discrete closed subgroups of $G$}\}.
$$
\end{prop}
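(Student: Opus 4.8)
The plan is to prove both implications of Proposition \ref{caprace} by unwinding the definition of fairness (Df. \ref{fair}) and the definition of the Chabauty topology. Recall that a neighborhood basis of a closed subgroup $L \in \mS(G)$ is given by sets of the form $\mathcal{U}(\mathcal{O}; F) = \{ L' \in \mS(G) : L' \cap \mathcal{O} \text{ meets the same "pattern" as } L \text{ on the finite set } F, \ L' \subset \mathcal{O} \text{-neighborhood of } L, \dots\}$; more precisely, for $G$ locally profinite, a basis of neighborhoods of $L$ is indexed by a compact open $\Omega_0 \ni e$ and a finite subset $F \subset L$, consisting of those $L'$ with $L' \cap \Omega_0 \subset L\Omega_0$ and $F \subset L'\Omega_0$ (I would cite \cite{Cha50} or recall this explicitly). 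The key translation is: a net $g_i K g_i^{-1}$ converges to a discrete subgroup $D$ if and only if, roughly, the subgroups $g_i K g_i^{-1}$ become "thin" near the identity, i.e. for every compact open $\Omega_0$ there is a smaller compact open $\Omega_0'$ with $g_i K g_i^{-1} \cap \Omega_0' \subset D\Omega_0'$ eventually. Since $D$ is discrete, for $\Omega_0'$ small enough $D \cap \Omega_0' = \{e\}$, so this says $g_i K g_i^{-1} \cap \Omega_0'$ is concentrated near $e$.

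For the contrapositive of the "only if" direction, I would assume the orbit closure contains a discrete subgroup $D$ and produce a violation of (\ref{HYP}). Pick a net (or sequence, if $G$ is second countable — but in general a net) $g_i$ with $g_i K g_i^{-1} \to D$. Fix an arbitrary $H \in \Omega$; I must find $g \in G$ such that $K \cap gH'g^{-1}$ cannot be made strictly smaller than $K \cap gHg^{-1}$ for a fixed $H'$, i.e. show fairness fails. The idea is: for a fixed compact open $K$, consider the "trace on $K$" map; since $g_i K g_i^{-1} \to D$ and $D$ is discrete, for large $i$ the subgroup $g_i K g_i^{-1}$ intersected with any fixed small open subgroup $\Omega_0$ of $K$ becomes trivial-ish, which forces $K \cap g_i^{-1}\Omega_0' g_i$... — here I need to carefully conjugate back. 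The cleanest route: fairness of $(G,K)$ is equivalent (Df. \ref{fair}) to: for all $H \in \Omega$ there is $H' \in \Omega$ with $[K : K \cap gH'g^{-1}] > [K : K \cap gHg^{-1}]$ for all $g$. Equivalently, conjugating by $g^{-1}$: $gKg^{-1} \cap H' \subsetneq gKg^{-1} \cap H$ as $g$ varies, with $H'$ uniform in $g$. So fairness says: for every $H \in \Omega$ there is $H' \subset H$ open such that $L \cap H' \subsetneq L \cap H$ for every $L$ in the $G$-conjugacy orbit of $K$. The failure of fairness is then: there is $H$ such that for every open $H' \subset H$ there is an orbit member $L$ with $L \cap H' = L \cap H$. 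By compactness of $\mS(G)$ (really of $\mS(H)$, the closed subgroups of $H$, which is compact) and a diagonal/limit argument over a neighborhood basis $H' $ of $e$ inside $H$, the corresponding $L$'s subconverge to some $L_\infty$ in the orbit closure with $L_\infty \cap H' = L_\infty \cap H$ for all $H'$ — wait, that is too strong; rather the limit must satisfy $L_\infty \cap \Omega_0 = L_\infty \cap H$ for arbitrarily small $\Omega_0$, forcing $L_\infty \cap H$ to be contained in every neighborhood of $e$, hence $L_\infty \cap H = \{e\}$, and more work shows $L_\infty$ is discrete. That gives the discrete subgroup in the orbit closure.

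For the "if" direction (fair $\Rightarrow$ no discrete subgroup in the closure), I argue the contrapositive dually: if $D$ is discrete and in $\overline{\{gKg^{-1}\}}$, pick $H \in \Omega$ small enough that $H \cap D = \{e\}$ (possible since $D$ is discrete; shrink $H$ inside $K$). For any candidate $H' \subsetneq H$ open, the Chabauty convergence $g_i K g_i^{-1} \to D$ combined with the fact that both $H, H'$ are \emph{open} (hence also closed) subgroups means: since $H \cap D = \{e\} = H' \cap D$, and $g_iKg_i^{-1}$ is eventually close to $D$, we get $g_i K g_i^{-1} \cap H = g_i K g_i^{-1} \cap H'$ for $i$ large (both are "$\subset$ small neighborhood of $e$" and $H' $ contains a neighborhood of $e$ in $H$ — here I need $H'$ to be open, so it contains some $\Omega_0$; once $g_iKg_i^{-1} \cap \Omega_0^{\text{even smaller}}$ captures all of $g_i K g_i^{-1} \cap H$, equality holds). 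Thus for this $H$, no $H'$ works uniformly in $g$, contradicting fairness. The main obstacle I anticipate is making the Chabauty-topology bookkeeping precise and uniform: specifically, extracting from "$g_i K g_i^{-1} \to D$ discrete" a \emph{single} small open $\Omega_0$ such that $g_i K g_i^{-1} \cap \Omega_0 = \{e\}$ cofinally (this uses discreteness of $D$ essentially and the definition of the topology), and conversely packaging the failure of fairness into a convergent net whose limit is verifiably discrete — the discreteness of the limit is the delicate point, since a priori a Chabauty limit of conjugates of $K$ could be any closed subgroup, and one must show the "thinness near $e$" condition is genuinely inherited in the limit. I would handle this by working with a fixed neighborhood basis $\{\Omega_0\}$ of $e$ consisting of compact open subgroups and checking the defining conditions of the topology level by level.
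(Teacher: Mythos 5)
Your proposal follows essentially the same route as the paper: both implications are handled by contraposition, using compactness of the Chabauty space to extract a convergent (sub)net of conjugates of $K$, and the (semi)continuity of intersection with a fixed compact open subgroup $H$ to show the limit is discrete, respectively to transfer discreteness of a limit point back into a failure of (\ref{HYP}) with a uniform witness $H$ --- the paper streamlines the continuity step by citing \cite{HS14} rather than verifying it level by level as you plan to. One bookkeeping remark: your second and third paragraphs have their ``if''/``only if'' labels interchanged (each in fact proves the contrapositive of the other direction), but together they do cover both implications.
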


\begin{proof}
First we assume $(G,K)$ is \emph{not} fair. This means there is some $H \in \Omega$ such that for all open subgroups $H' \subset H$ there exists a $g \in G$ for which we have an equality
$$
K \cap gH'g^{-1}=K \cap gHg^{-1}.
$$
We pick a neighborhood basis at the identity $\{H_i\}_{i \in I}$, for some directed set $I$, consisting of open subgroups $H_i \subset H$. For each $i \in I$ we select a $g_i \in G$ with the property that
$$
K \cap g_iH_ig_i^{-1}=K \cap g_iHg_i^{-1}.
$$
Equivalently, $g_i^{-1}Kg_i \cap H \subset H_i$. Consider the net $(g_i^{-1}Kg_i)_{i \in I}$ in $\mS(G)$. Since $\mS(G)$ is compact we can extract a convergent subnet 
$(g_{f(j)}^{-1}Kg_{f(j)})_{j \in J}$ for some reindexing function $f: J \rightarrow I$. Call the limit $\Delta$. We claim $\Delta$ is discrete, which will finish the proof of the {\it{if}} part (by contraposition). 

Intersection with $H$ gives a continuous map (see \cite[Prop.~2.2]{HS14} for instance)
\begin{align*}
\mS(G) & \longrightarrow \mS(H) \\
C & \longmapsto  C \cap H.
\end{align*}
We deduce that $g_{f(j)}^{-1}Kg_{f(j)} \cap H$ converges to $\Delta \cap H$. On the other hand this net converges to $\{e\}$ since $g_i^{-1}Kg_i \cap H \subset H_i$ and $H_i \rightarrow \{e\}$. As $\mS(G)$ is Hausdorff we conclude that $\Delta \cap H=\{e\}$, and in particular $\Delta$ is discrete. 

To prove the \emph{only if} part, assume there is some discrete group $\Delta$ in the closure of $\{g^{-1}Kg: g \in G\}$. We write $\Delta$ as a limit of a net 
$(y_j^{-1}Ky_j)_{j \in J}$ (for some possibly new directed set $J$). Since $\Delta$ is discrete, $\Delta \cap H=\{e\}$ for some open subgroup $H \subset K$. For every 
open subgroup $H' \subsetneq H$ there is a $j$ for which
$$
y_j^{-1}Ky_j \cap H \subset H',
$$
using that $y_j^{-1}Ky_j \cap H \rightarrow \{e\}$. We infer that the inclusions below are equalities:
$$
y_j^{-1}Ky_j \cap H \subset y_j^{-1}Ky_j \cap H' \subset y_j^{-1}Ky_j \cap H.
$$
Conjugation by $y_j$ shows that 
$$
K \cap y_jHy_j^{-1}=K \cap y_jH'y_j^{-1}.
$$
Therefore $(G,K)$ is not fair.
\end{proof}

In the previous proof we used the topology of geometric convergence on $\mS(G)$. This is identical to the Chabauty topology by \cite[Lem.~2, p.~880]{GR06}, for example. 

\begin{rem}
Caprace has informed us that the so-called Neretin groups \emph{fail} to satisfy Proposition \ref{caprace}: For such groups there are choices of $K$ for which 
$\{gKg^{-1}: g \in G\}$ has $\{e\}$ as an accumulation point. 
\end{rem}


\subsection*{Acknowledgments} 

We thank Alexander Lubotzky, Alireza Salehi Golsefidy, and Pierre-Emmanuel Caprace for sharing their knowledge throughout this project.



\bigskip

\noindent {\it{E-mail addresses}}: {\texttt{aophir@ucsd.edu, \: csorensen@ucsd.edu}}

\noindent {\sc{Department of Mathematics, UCSD, La Jolla, CA, USA.}}

\end{document}